\newcommand{\nN}{\mathbb{N}}
\newcommand{\rR}{\mathbb{R}}
\newtheorem{theorem}{Theorem}[section]
\newtheorem{corollary}[theorem]{Corollary}
\newtheorem{definition}[theorem]{Definition}
\newtheorem{lemma}[theorem]{Lemma}
\newtheorem{remark}[theorem]{Remark}
\begin{document}
\title{A convexity in $\rR^2$ with river metric}
\author[Oki\v ci\' c]{Nermin Oki\v ci\' c} 
\email{nermin.okicic@untz.ba}

\author[Reki\' c-Vukovi\' c]{Amra Reki\' c-Vukovi\' c}
\email{amra.rekic@untz.ba}
\keywords{metric spaces, convex structure, convex sets}
\subjclass{47H08,52A10,54E18}

\begin{abstract}
In this paper we consider the space $\rR^2$ with the river metric $d^*$ and different types of  convexity of this space. We define $W$-convex structure in $(\rR^2,d^*)$ and we give the complete characterization of the convex sets in this space. We consider some measures of noncompactness  and we give the moduli of noncompactness for considered measures on this space.
\end{abstract}

\maketitle

\section{Introduction}
 The space $\rR^2$ with the river metric $d^*$ is not normed space. It is not even a linear metric space. This can cause certain difficulties in observing spaces like this, because the definitions of many terms must be adapted to these spaces. The concept of the convexity structures is not limited on the linear vector spaces, but also can be extended for some interesting notions of the convex structures for the metric spaces, even for the topological spaces. Takahashi \cite{article.7} introduced the notion of the convex structure on the metric spaces which is known as $W$-convex structure and it is generalization of the convexity, which give us the standard term of the convex sets in normed spaces. Menger \cite{article.8} defined the concept of the convexity in metric spaces called Menger convexity which is the generalization of Takahashi convex structure. Bryant \cite{article.3} considered the notion of the (metrically) convex metric space and (metric) segment space, pointing that every segment space is metrically convex, but in complete metric spaces these two concepts coincide, \cite{article.9}. On the other hand, Foertsch \cite{article.1} compares two different notions of convexity of geodesic metric spaces namely
(strict/uniform) ball convexity and (strict/uniform) distance convexity. Strict convexity is usually one of the main subjects in studying Banach spaces and it is known that metric lines are unique in Banach spaces, \cite{article.10}. But, this doesn't have to hold in the pure metric spaces. There are complete convex metric spaces, externally convex metric spaces in which these concepts of unique metric lines and strict convexity, defined in terms of metric spaces, are not equivalent. It was shown that
unique metric lines (metric segments have unique prolongations) and strict
convexity are equivalent in a larger class of spaces, \cite{article.5}, \cite{article.6}. Narang and Tejpal \cite{article.4} considered spaces in which metric segments have unique prolongations and gave results on the best approximation in such metric spaces, also considering $M$-spaces introduced by Khalil \cite{article.5} and their relations with strict convexity. Khamsi \cite{article.14} considered nearly uniformly convexity in metric spaces and Takahashi \cite{article.13} introduced a notion of uniform convexity in convex metric spaces. \\
In the following, we will observe the set $\mathbb{R}^2$ with so-called river metric defined by
\[
d^*(v_1,v_2)=\left\{
\begin{array}{ccc}
  |y_1-y_2| & , & x_1=x_2 ,\\
  |y_1|+|y_2|+|x_1-x_2| & , & x_1\neq x_2, \\
\end{array} \right .
\]
where $v_1=(x_1,y_1)$, $v_2=(x_2,y_2)\in \rR^2$. We will denote the open ball by $B((x_0,y_0),r)=\{(x,y)\in \rR^2 \ | \ d^*((x,y),(x_0,y_0))<r\}$, where $v=(x_0,y_0)\in \rR^2$ is the center of the ball, and $r>0$ is the radius. With $\overline{B}((x_0,y_0),r)$ we denote closed ball and generally with $\overline{A}$ the closure of the set $A$. \\
Following lemma describes the metric segment in $(\rR^2,d^*)$.

\begin{lemma}[Lemma 2.1, \cite{article.11}]\label{MetrickiSegment}
The metric segment between points $v_1=(x_1,y_1), \ v_2=(x_2,y_2)\in \rR^2$ with the river metric $d^*$ is the set
\begin{align*}
[v_1,v_2]= & \{(x_1,a)\in \rR^2 \ | \ a\in [0,y_1] \ (\textrm{or }a\in [y_1,0])\} \\
& \cup \{(x_2,a)\in \rR^2 \ | \ a\in [0,y_2] \ (\textrm{or }a\in [y_2,0])\}\cup \{(b,0)\in \rR^2 \ | \ x_1\leq b\leq x_2\},
\end{align*}
for $x_1< x_2$, or the set
\[
[v_1,v_2]=\{(x_1,a)\in \rR^2 \ | \ y_1\leq a\leq y_2\},
\]
for $x_1=x_2$ and $y_1\leq y_2$.
\end{lemma}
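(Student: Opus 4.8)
The plan is to characterize the metric segment $[v_1,v_2]$ as the set of all points $w\in\rR^2$ that lie metrically between $v_1$ and $v_2$, i.e.\ that satisfy
\[
d^*(v_1,w)+d^*(w,v_2)=d^*(v_1,v_2).
\]
By the symmetry of $d^*$ under relabelling of the two points we may assume the stated orderings $x_1<x_2$ (first case) or $x_1=x_2,\ y_1\le y_2$ (second case), and in each case I would prove the two inclusions separately. For the inclusion showing that every listed point belongs to $[v_1,v_2]$, I would substitute a generic point of each of the three pieces (the vertical segment above $x_1$, the vertical segment above $x_2$, and the horizontal piece $\{(b,0)\ |\ x_1\le b\le x_2\}$) into $d^*(v_1,w)+d^*(w,v_2)$ and verify that the contributions of $|y|$ and of $b$ cancel, leaving exactly $|y_1|+|y_2|+|x_1-x_2|=d^*(v_1,v_2)$. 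For example, for $w=(b,0)$ one computes $d^*(v_1,w)=|y_1|+(b-x_1)$ and $d^*(w,v_2)=|y_2|+(x_2-b)$, whose sum is independent of $b$.

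The substantive direction is the reverse inclusion, that no other point lies between $v_1$ and $v_2$. Here I would fix an arbitrary $w=(x,y)$ and split into the subcases $x\notin\{x_1,x_2\}$, $x=x_1$, and $x=x_2$, since the piecewise definition of $d^*$ forces a different branch depending on whether an abscissa is shared. When $x\notin\{x_1,x_2\}$, both distances use the ``$x_1\neq x_2$'' branch and the betweenness equality reduces to $2|y|+|x_1-x|+|x-x_2|=|x_1-x_2|$; because $|x_1-x|+|x-x_2|\ge|x_1-x_2|$ with equality precisely when $x$ lies between $x_1$ and $x_2$, this forces both $y=0$ and $x_1\le x\le x_2$, yielding a point of the horizontal piece. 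When $x=x_1$ (and $x_1\neq x_2$), the equality collapses to the scalar identity $|y_1-y|+|y|=|y_1|$, which holds iff $y$ lies between $0$ and $y_1$, recovering exactly the vertical piece above $x_1$; the case $x=x_2$ is symmetric.

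The second case $x_1=x_2$ is shorter: the claimed segment is the ordinary vertical interval, and for $w=(x,y)$ with $x\neq x_1$ one checks that $d^*(v_1,w)+d^*(w,v_2)\ge|y_1|+|y_2|+2|x-x_1|>|y_1|+|y_2|\ge y_2-y_1=d^*(v_1,v_2)$, so such $w$ cannot lie between the two points, while for $x=x_1$ the equality again reduces to one-dimensional betweenness on the vertical line. I expect the main obstacle to be organizational rather than conceptual: the piecewise nature of $d^*$ means one must keep careful track of which branch each of the two distances falls into, and the attention-demanding step is reducing every subcase to the scalar fact $|a-c|+|c-b|=|a-b|$ iff $c$ lies between $a$ and $b$, then reading off the equality conditions correctly.
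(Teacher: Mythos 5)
Your proposal is correct, but note that this paper never proves the lemma at all: it is imported verbatim (as ``Lemma 2.1'') from the authors' earlier article \cite{article.11}, so there is no in-paper argument to compare against. Your blind reconstruction is the natural direct verification and it is sound: the forward inclusion by substituting a generic point of each of the three pieces and watching the contributions cancel, and the reverse inclusion by splitting on whether the abscissa of the candidate point $w=(x,y)$ equals $x_1$, equals $x_2$, or neither, so that each distance falls into a determined branch of $d^*$. Every subcase then reduces, as you say, to the scalar equality condition $|a-c|+|c-b|=|a-b|$ iff $c$ lies between $a$ and $b$; in particular the subcase $x\notin\{x_1,x_2\}$ correctly forces \emph{both} $y=0$ and $x_1\le x\le x_2$, and the degenerate case $x_1=x_2$ is dispatched by the strict inequality $d^*(v_1,w)+d^*(w,v_2)\ge |y_1|+|y_2|+2|x-x_1|>y_2-y_1$ when $x\neq x_1$. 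One cosmetic point worth fixing if you write this out in full: for the horizontal piece you compute $d^*(v_1,(b,0))=|y_1|+(b-x_1)$ using the two-abscissa branch, which at the endpoint $b=x_1$ is actually the one-abscissa branch $|y_1-0|$; the two formulas happen to agree there, but you should say so explicitly rather than let the branch assignment pass silently.
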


\section{Convex sets in the space $(\rR^2,d^*)$}

In \cite{article.7} Takahashi defines a convex structure on a metric space which allows the definition of term of convex sets in this space.

\begin{definition}[\cite{article.7}]\label{Definicija_Wstruktura}
A mapping $W:X^2\times [0,1]\rightarrow X$ is said to be a convex structure on $X$ if for all $x,y,u \in X$ and $\lambda \in [0,1]$
\begin{equation}\label{DW_konveksnastruktura}
d(u,W(x,y,\lambda))\leq \lambda d(u,x)+(1-\lambda)d(u,y) .
\end{equation}
We call a metric space with a convex structure $W$, a $W$-convex metric space.
\end{definition}
The mapping $W:X\times X\times [0,1] \rightarrow X$ given by
\[
W(x,y,\lambda)=\lambda x+(1-\lambda)y
\]
defines convex structure on arbitrary normed space $X$. Therefore, any normed space is $W$-convex space.

According to the previous definition we will describe the convex structure on $(\rR^2, d^*)$. Let us define the function $W:\rR^2\times \rR^2\times [0,1]\rightarrow \rR^2$ such that for $v_1,v_2\in \rR^2$ and $\lambda \in [0,1]$
\[
W(v_1,v_2,\lambda)=z=(z_x,z_y) ,
\]
where
\begin{equation}\label{uslov1}
d^*(z,v_1)=(1-\lambda)d^*(v_1,v_2),
\end{equation}
\begin{equation}\label{uslov2}
d^*(z,v_2)=\lambda d^*(v_1,v_2) .
\end{equation}

We will prove that $W$ is the convex structure. Indeed, let $v_1=(x_1,y_1)$, $v_2=(x_2,y_2)$ be arbitrary and, without loss of generality, $0\leq x_1<x_2$. The point $z=W(v_1,v_2,\lambda)$ has three different positions in $\rR^2$, depending on $\lambda \in [0,1]$, as follows. \\
(a) \ \  Since
\begin{align*}
    d^*(v_1,z)\leq |y_1| & \Longleftrightarrow  (1-\lambda)d^*(v_1,v_2)\leq |y_1| \\
     & \Longleftrightarrow  \lambda \geq 1-\displaystyle \frac{|y_1|}{d^*(v_1,v_2)} ,
  \end{align*}
we conclude that $z=(x_1,z_y)$. Using (\ref{uslov1}) and (\ref{uslov2}) we have $\lambda d^*(z,v_1)=(1-\lambda)d^*(z,v_2),$ i.e.
\begin{equation}\label{jednakost3}
\lambda |y_1-z_y|=(1-\lambda )|z_y|+(1-\lambda )|y_2|+(1-\lambda)|x_1-x_2| .
\end{equation}
If $y_1\geq 0$, then $0\leq z_y\leq y_1$, and using (\ref{jednakost3}) we get
\[
z_y=\lambda y_1-(1-\lambda )|y_2|-(1-\lambda)|x_1-x_2| \ .
\]
If $y_1<0$, then $y_1<z_y\leq 0$, and using (\ref{jednakost3}) gives us
\[
z_y=\lambda y_1+(1-\lambda)|y_2|+(1-\lambda )|x_1-x_2| .
\]
(b) \ \ Since
\begin{align*}
    |y_1|<d^*(v_1,z)\leq |y_1|+|x_1-x_2| & \Longleftrightarrow   |y_1|<(1-\lambda)d^*(v_1,v_2)\leq |y_1|+|x_1-x_2|\\
     & \Longleftrightarrow  1-\frac{|y_1|+|x_1-x_2|}{d^*(v_1,v_2)}\leq \lambda <1- \frac{|y_1|}{d^*(v_1,v_2)} ,
\end{align*}
we have $z=(z_x,0)$. Using conditions (\ref{uslov1}) and (\ref{uslov2}) we have $\lambda d^*(v_1,z)=(1-\lambda)d^*(v_2,z)$, i.e. $\lambda |y_1|+\lambda |z_x-x_1|=(1-\lambda)|y_2|+(1-\lambda)|z_x-x_2|$. Since $x_1\leq z_x\leq x_2$ we conclude
\[
z_x=-\lambda |y_1|+(1-\lambda)|y_2|+\lambda x_1+(1-\lambda)x_2 .
\]
(c) \ \ From
\begin{align*}
    |y_1|+|x_1-x_2|<d^*(v_1,z)\leq d^*(v_1,v_2) & \Longleftrightarrow  |y_1|+|x_1-x_2|\leq (1-\lambda)d^*(v_1,v_2)\leq d^*(v_1,v_2) \\
     & \Longleftrightarrow  0\leq \lambda \leq 1-\displaystyle \frac{|y_1|+|x_1-x_2|}{d^*(v_1,v_2)} ,
\end{align*}
we have $z=(x_2,z_y)$. The equality $\lambda d^*(v_1,z)=(1-\lambda)d^*(v_2,z)$ is equivalent with
\begin{equation}\label{jednakost4}
\lambda |y_1|+\lambda |z_y|+\lambda |x_1-x_2|=(1-\lambda)|y_2-z_y| .
\end{equation}
If $y_2\geq 0$ and $0\leq z_y\leq y_2$, then form the equality (\ref{jednakost4}) we have
\[
z_y=-\lambda |y_1|+(1-\lambda)y_2-\lambda |x_1-x_2|.
\]
If $y_2<0$ and $y_2\leq z_y\leq 0$ and using (\ref{jednakost4}) we have
\[
z_y=\lambda |y_1|+(1-\lambda)y_2+\lambda |x_1-x_2| .
\]
Now, let us check the accuracy of the inequality (\ref{DW_konveksnastruktura}) for our defined function $W$. Let $v=(x,y)$ be arbitrary and we denote
\begin{align*}
&A=d^*(v,z)=|y|+|z_y|+|x-x_z|, \\
&B=\lambda d^*(v,v_1)+(1-\lambda)d^*(v_1,v_2)=|y|+\lambda |y_1|+(1-\lambda)|y_2|+\lambda|x-x_1|+(1-\lambda)|x-x_2|.
\end{align*}
Depending on $\lambda\in [0,1]$, we will consider the following cases. \\
(a) \ \ Let $1-\displaystyle \frac{|y_1|}{d^*(v_1,v_2)}\leq \lambda \leq 1$. This means that $z=(x_1,z_y)$.
If $z_y=\lambda y_1-(1-\lambda)|y_2|-(1-\lambda)|x_1-x_2|$, then
\begin{align*}
    A & =  |y|+\lambda y_1-(1-\lambda)|y_2|-(1-\lambda)|x_1-x_2|+|x-x_1| \\
     & \leq  |y|+\lambda |y_1|+(1-\lambda)|y_2|-(1-\lambda)|x_1-x_2|+|x-x_1|  .
\end{align*}
Since $|x-x_1|-(1-\lambda)|x_1-x_2|\leq \lambda |x-x_1|+(1-\lambda)|x-x_2|$, it holds $A\leq B$. \\
If $z_y=\lambda y_1+(1-\lambda)|y_2|+(1-\lambda)|x_1-x_2|$, then
\begin{align*}
    A & =  |y|-\lambda y_1-(1-\lambda)|y_2|-(1-\lambda)|x_1-x_2|+|x-x_1| \\
     & \leq   |y|+\lambda y_1+(1-\lambda)|y_2|-(1-\lambda)|x_1-x_2|+|x-x_1|  .
\end{align*}
Since $|x-x_1|-(1-\lambda)|x_1-x_2|\leq \lambda |x-x_1|+(1-\lambda)|x-x_2|$, we have
\[
A\leq |y|+\lambda|y_1|+(1-\lambda)|y_2|+\lambda |x-x_1|+(1-\lambda)|x-x_2|=B .
\]

(b) \ \ Let $1-\displaystyle \frac{|y_1|+|x_1-x_2|}{d^*(v_1,v_2)}\leq \lambda <1-\displaystyle \frac{|y_1|}{d^*(v_1,v_2)}$, that is $z=(z_x,0)$. Then $A=|y|+|x-z_x|$. If $x>z_x$, we have
\begin{align*}
    A & =  |y|+x+\lambda^|y_1|-(1-\lambda)|y_2|-\lambda x_1-(1-\lambda)x_2 \\
     & \leq  |y|+\lambda|y_1|+(1-\lambda)|y_2|+x-\lambda x_1-(1-\lambda)x_2  .
\end{align*}
Since
\[
x-\lambda x_1-(1-\lambda)x_2=\lambda (x-x_1)+(1-\lambda)(x-x_2)\leq \lambda |x-x_1|+(1-\lambda)|x-x_2|,
\]
we have $A\leq B$. \\
Let $x<z_x$. Then $A=|y|+z_x-x$ and
\begin{align*}
    A & =  |y|+\lambda|y_1|+(1-\lambda)|y_2|+(1-\lambda)x_2+\lambda x_1-x \\
     & \leq  |y|+\lambda |y_1|+(1-\lambda)|y_2|+\lambda (x_1-x)+(1-\lambda)(x_2-x) ,
\end{align*}
i.e. $A\leq |y|+\lambda |y_1|+(1-\lambda)|y_2|+\lambda |x_1-x|+(1-\lambda)|x_2-x|=B$.

\noindent (c) \ \ Let $0\leq  \lambda <1-\displaystyle \frac{|y_1|+|x_1-x_2|}{d^*(v_1,v_2)}$, that is $z=(x_2,z_y)$. If $z_y=-\lambda |y_1|+(1-\lambda )y_2-\lambda |x_1-x_2|$, then
\begin{align*}
    A & =  |y|-\lambda |y_1|+(1-\lambda)y_2-\lambda |x_1-x_2|+|x-x_2| \\
     & \leq  |y|+\lambda|y_1|+(1-\lambda)|y_2|-\lambda|x_1-x_2|+|x-x_2|  .
\end{align*}
Since $-\lambda|x_1-x_2|+|x-x_2|\leq \lambda |x-x_1|+(1-\lambda)|x-x_2|$, it holds
\[
A\leq |y|+\lambda|y_1|+(1-\lambda )|y_2|+\lambda |x-x_1|+(1-\lambda)|x-x_2|=B.
\]
If $z_y=\lambda |y_1|+(1-\lambda)y_2+\lambda|x_1-x_2|$, then
\begin{align*}
    A & =  |y|-\lambda |y_1|-(1-\lambda)y_2-\lambda |x_1-x_2|+|x-x_2| \\
     & \leq  |y|+\lambda |y_1|+(1-\lambda)|y_2|-\lambda|x_1-x_2|+|x-x_2|.
\end{align*}

This means that $A\leq B$. Indeed, the function $W$ is convex structure defined on $\rR^*$. Therefore, by the Definition \ref{Definicija_Wstruktura}, $(\rR^2,d^*)$ is $W$-convex metric space.

\begin{definition}[\cite{article.7}]
A nonempty subset $K$ of a $W$-convex metric space $(X,d)$ is said to be convex if $W(x,y,\lambda)\in K$ for every $x,y\in K$ and $\lambda \in [0,1]$.
\end{definition}

For any two points $x,y\in X$ the set
\[
\{ z\in X: \ d(x,y)=d(x,z)+d(z,y)\},
\]
is called a metric segment and is denoted by $[x,y]$, i.e.
\[
[x,y]=\{ z\in X: \ d(x,y)=d(x,z)+d(z,y)\} .
\]
Since for $x,y\in X$ and $\lambda \in [0,1]$ we have
\[
d(x,y)=d(x,W(x,y,\lambda))+d(W(x,y,\lambda),y) ,
\]
we can think about the metric segment $[x,y]$ as of the set of the values of the mapping $W$, for all $\lambda \in [0,1]$ and fixed $x,y \in X$, that is, for any two points $x,y\in X$, metric segment is the set
\[
[x,y]=\{ W(x,y,\lambda): \ \lambda \in [0,1]\} .
\]

\begin{remark}
A nonempty subset $K$ of a $W$-convex metric space $(X,d)$ is convex set if $[x,y]\subset K$ for all $x,y\in K$.
\end{remark}

With the following statement, we give the complete characterization of convex sets in the space $\rR^2$ with the river metric.

\begin{theorem}\label{Tkonveksniskupovi}
Let $D$ be a set that is not a ball in $(\rR^2,d^*)$. The set $D$ is convex if and only if for every point $(x,y)$ in $D$, the metric segment $[(x,0),(x,y)]$, is a subset of $D$ and $[ a,b]\times \{0\}\subset D$, where $a=\inf\{x \ | \ v=(x,y)\in D\}$ and $\displaystyle b=\sup\{x \ | \ v=(x,y)\in D\}$.
\end{theorem}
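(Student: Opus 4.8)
The plan is to prove both implications directly from the explicit description of metric segments in Lemma~\ref{MetrickiSegment}, together with the characterization (established in the excerpt) that a set is convex exactly when it contains $[v_1,v_2]$ for every pair of its points. I would do the reverse implication first, since it is the cleaner one. Assume the two conditions hold and take arbitrary $v_1=(x_1,y_1),v_2=(x_2,y_2)\in D$. If $x_1=x_2$, Lemma~\ref{MetrickiSegment} says $[v_1,v_2]$ is the vertical segment $\{x_1\}\times[\min(y_1,y_2),\max(y_1,y_2)]$; since $[(x_1,0),(x_1,y_1)]$ and $[(x_1,0),(x_1,y_2)]$ both lie in $D$ by the first condition, their union already contains this segment, so $[v_1,v_2]\subseteq D$. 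If $x_1\neq x_2$, Lemma~\ref{MetrickiSegment} decomposes $[v_1,v_2]$ into the two vertical runs $[(x_i,0),(x_i,y_i)]$ and the horizontal river piece $[x_1,x_2]\times\{0\}$; the vertical runs lie in $D$ by the first condition, and since $a\le x_1,x_2\le b$ the river piece is contained in $[a,b]\times\{0\}\subseteq D$ by the second. Hence $D$ is convex, and I note that this direction never uses the hypothesis that $D$ is not a ball.

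For the forward implication I would assume $D$ convex and read off the two conditions by feeding well-chosen pairs of points into Lemma~\ref{MetrickiSegment}. To obtain the first condition, fix $(x,y)\in D$ and choose a second point $(x',y')\in D$ with $x'\neq x$; the metric segment between them contains, as its vertical run at abscissa $x$, exactly $[(x,0),(x,y)]$, which must therefore lie in $D$ by convexity. To obtain $[a,b]\times\{0\}\subseteq D$, I would show $(c,0)\in D$ for each $c\in[a,b]$: for $c$ strictly between the infimum $a$ and the supremum $b$ there are points of $D$ with abscissas on both sides of $c$, and the river piece of the segment joining two such points passes through $(c,0)$, forcing $(c,0)\in D$.

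The step I expect to be the main obstacle is exactly this second condition at the extreme abscissas $c=a$ and $c=b$, and more generally whenever the infimum or supremum is not attained by a point of $D$. The interior argument above gives $(c,0)\in D$ only for $c\in(a,b)$, so passing to the closed endpoints requires a limiting argument, and I expect the clean statement to need $D$ closed (and bounded); without some such assumption one can exhibit convex non-ball sets, such as an open river interval $(\alpha,\beta)\times\{0\}$, for which the conclusion fails at the endpoints. The second place where the hypothesis that $D$ is not a ball is genuinely needed is the degenerate situation in which every point of $D$ has the same abscissa $x_0$: here convexity only forces $D=\{x_0\}\times I$ for an interval $I$, the collapsed condition $[a,b]\times\{0\}=\{(x_0,0)\}\subseteq D$ forces $0\in I$, and one must invoke the classification of the balls that live on a single vertical line (essentially the bounded intervals that do not straddle the river) to see that excluding balls is what rules out the remaining bad intervals. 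Carefully handling these edge cases, rather than the two main segment computations, is where the real work lies.
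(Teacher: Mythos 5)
Your positive arguments coincide with the paper's own proof. The reverse implication is proved there exactly as you propose: decompose $[v_1,v_2]=M_1\cup M_2\cup M_3$ via Lemma~\ref{MetrickiSegment}, cover the vertical runs by the first hypothesis and the river piece by $[a,b]\times\{0\}$ (the paper in fact only treats $x_1<x_2$, so your explicit handling of $x_1=x_2$ is a small improvement). The forward implication's first condition is also obtained there by your two-case split: either $D$ lies on a single vertical line, where the ``not a ball'' hypothesis is invoked, or one joins $(x,y)$ to a point of $D$ with a different abscissa and reads off the vertical run of that segment.

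The obstacle you flag is real, and it is a defect of the paper rather than of your plan: the paper's forward direction never proves $[a,b]\times\{0\}\subseteq D$ at all --- its argument stops once $[(x,0),(x,y)]\subseteq D$ is established. Your counterexample shows the omission cannot be repaired as the theorem is stated: $D=(\alpha,\beta)\times\{0\}$ is convex (the segment between two river points is the river interval between them), is not a ball (any ball containing points with two distinct abscissas also contains points off the river), yet $(\alpha,0)\notin D$; so ``convex and not a ball'' does not imply the second condition, and some hypothesis such as closedness of $D$, or attainment of the infimum and supremum, is genuinely needed, as you predict. One further caveat applies both to your degenerate vertical case and to the paper's: on a single vertical line the balls are only the open and the closed intervals, so a half-open interval such as $\{x_0\}\times[1,2)$ is convex, is not a ball, and still violates the first condition (since $(x_0,0)\notin D$). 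Hence excluding balls does not rule out all the bad vertical intervals; the paper hides this by tacitly assuming $D=\{(x,a)\ |\ y_1\leq a\leq y_2\}$ is a closed interval. In short, your proposal follows the paper's method wherever the paper actually gives an argument, and where it stops short it correctly identifies claims that the paper asserts but does not, and without additional hypotheses cannot, prove.
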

\begin{proof}
Let $D$ be a convex set in $(\rR^2,d^*)$ that is not a ball and let $(x,y)$ be an arbitrary point in $D$. Suppose $D=\{(x,a) \ | \ y_1\leq a \leq y_2\}$. If it were $y_1>0$, then the set $D$ would be the ball $B\left(\displaystyle \frac{y_2+y_1}{2},\displaystyle \frac{y_2-y_1}{2}\right)$, which we assumed that the set $D$ is not. Therefore, $y_1\leq 0$. Then the point $(x,0)$ belongs to the set $D$, and because of the convexity of the set $D$, $[(x,0),(x,y)]\subseteq D$ holds.\\
Suppose that the set $D$ also contains the point $(x_1,y_1)$, where $x_1\neq x$. Due to the convexity of the set $D$, $[(x,y),(x_1,y_1)]\subseteq D$ holds. According to Lemma \ref{MetrickiSegment} $[(x,y),(x_1,y_1)]=M_1\cup M_2 \cup M_3$, where $M_1=[(x_1,0),(x_1,y_1)]$, $M_2=[(x_2,0),(x_2,y_2)]$ and $M_3=[(x_1,0),(x_2,0)]$ are metric segments and it is obvious that $M_1\subset D$, that is $[(x,0 ),(x,y)]\subset D$.

Now let $D$ be a set that is not a ball and let for every point $(x,y)$ in $D$ be $[(x,0),(x,y)]\subset D$. Let $v_1=(x_1,y_1)$ and $v_2=(x_2,y_2)$ be arbitrary points in $D$ and let $x_1<x_2$. By assumption $M_1=[(x_1,0),(x_1,y_1)] \subset D$ and $M_2=[(x_2,0),(x_2,y_2)] \subset D$. As $x_1,x_2\in [a,b]$, where $a=\inf\{x \ | \ v=(x,y)\in D\}$ and $\displaystyle b=\sup\{x \ | \ v=(x,y)\in D\}$, it is clear that $M_3=[(x_1,0),(x_2,0)]\subseteq [a,b]\times \{0\}$ holds , that is $M_3\subseteq D$. So $M_1\cup M_2 \cup M_3=[v_1,v_2]\subseteq D$, and the set $D$ is convex.
\end{proof}

The next statement is a direct consequence of the above statement, and it gives us a special class of convex sets.

\begin{corollary}\label{KonveksniSkupovi}
Sets of the form $K=[a,b]\times [c,d]$, where $a,b,c,d\in \rR$, $c\leq 0$ and $d\geq 0$ are convex.
\end{corollary}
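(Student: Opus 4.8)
The plan is to invoke the characterization in Theorem \ref{Tkonveksniskupovi} and simply verify its two defining conditions for the rectangle $K=[a,b]\times[c,d]$. Everything is driven by the single observation that the hypotheses $c\leq 0$ and $d\geq 0$ force $0\in[c,d]$. I would first record that for $K=[a,b]\times[c,d]$ the extremal quantities appearing in the theorem coincide with the given bounds, namely $\inf\{x \ | \ (x,y)\in K\}=a$ and $\sup\{x \ | \ (x,y)\in K\}=b$, so there is no clash with the names $a,b$ used in the statement.

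Next I would check the two conditions. For the condition $[a,b]\times\{0\}\subseteq K$: since $0\in[c,d]$, each point $(x,0)$ with $x\in[a,b]$ has both coordinates in the defining intervals and therefore lies in $K$, which is immediate. For the condition on vertical metric segments, fix an arbitrary $(x,y)\in K$. By Lemma \ref{MetrickiSegment}, applied in the case of equal first coordinates, the segment $[(x,0),(x,y)]$ is the vertical interval consisting of the points $(x,t)$ with $t$ between $0$ and $y$. As $y\in[c,d]$ and $0\in[c,d]$, every such $t$ lies in the interval $[c,d]$ by convexity of a real interval, so $(x,t)\in K$ and the entire segment is contained in $K$.

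The only subtlety is the clause ``that is not a ball'' in Theorem \ref{Tkonveksniskupovi}, so I would dispose of it as follows. When $a<b$ the set $K$ is genuinely two-dimensional; since every ball in $(\rR^2,d^*)$ intersects a vertical line in an interval whose length decreases as the line moves away from the central one, such a $K$ is not a ball and the theorem applies directly. When $a=b$ the set reduces to the vertical segment $\{a\}\times[c,d]$, and its convexity is immediate from Lemma \ref{MetrickiSegment}, because the metric segment joining any two of its points is again a subinterval of $\{a\}\times[c,d]$. In both cases $K$ is convex, which is the claim. I expect the verification of the two conditions to be entirely routine; the only place demanding a separate (but short) argument is the ball exception, and in particular the degenerate case $a=b$.
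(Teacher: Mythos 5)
Your proof is correct and takes essentially the same route as the paper, which states this corollary as a direct consequence of Theorem \ref{Tkonveksniskupovi} without further argument --- exactly the verification you carry out. Your extra care with the ``not a ball'' hypothesis, in particular disposing of the degenerate case $a=b$ by a direct appeal to Lemma \ref{MetrickiSegment}, fills in a detail the paper leaves implicit.
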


Special forms of convex sets according to the above corollary are sets of the form $[a,b]\times \{0\}$ and $\{a\}\times [c,d]$ ($a,b,c,d \in \rR$).\\
In the general case, the intersection of convex sets is again a convex set, while this is not the case for the union.

\begin{theorem}
Let $A,B\subset \rR^2$ be convex sets such that $A\cap B\neq \varnothing$. Then $A\cup B$ is a convex set.
\end{theorem}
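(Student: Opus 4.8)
The plan is to verify, for the set $D = A \cup B$, the two defining conditions of the characterization in Theorem \ref{Tkonveksniskupovi}, using that we already know both conditions hold for $A$ and for $B$ individually. Write $a_A = \inf\{x \mid (x,y)\in A\}$, $b_A = \sup\{x \mid (x,y)\in A\}$, and analogously $a_B,b_B$ for $B$; then the quantities $a,b$ associated with $D$ in Theorem \ref{Tkonveksniskupovi} satisfy $a = \min\{a_A,a_B\}$ and $b = \max\{b_A,b_B\}$, since the infimum (resp.\ supremum) of the $x$-coordinates over $A\cup B$ is the smaller (resp.\ larger) of the two.

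For the first condition, the vertical-segment property, let $(x,y)$ be an arbitrary point of $D$. Then $(x,y)$ lies in $A$ or in $B$; say $(x,y)\in A$. Since $A$ is convex, Theorem \ref{Tkonveksniskupovi} gives $[(x,0),(x,y)]\subseteq A\subseteq D$, and the case $(x,y)\in B$ is identical. Thus the vertical segment down to the axis is contained in $D$ for every point of $D$, with essentially no work.

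The second condition, containment of the axis interval, is the heart of the matter and is exactly where the hypothesis $A\cap B\neq\varnothing$ enters. By Theorem \ref{Tkonveksniskupovi} applied to $A$ and to $B$ we have $[a_A,b_A]\times\{0\}\subseteq A$ and $[a_B,b_B]\times\{0\}\subseteq B$; the key observation is that these two closed intervals overlap. Indeed, choose $c=(x_c,y_c)\in A\cap B$. From $c\in A$ we get $a_A\le x_c\le b_A$, and from $c\in B$ we get $a_B\le x_c\le b_B$, so $x_c\in[a_A,b_A]\cap[a_B,b_B]$. Two closed intervals sharing a point have as their union the single closed interval spanning their extremes, whence $[a_A,b_A]\cup[a_B,b_B]=[\min\{a_A,a_B\},\max\{b_A,b_B\}]=[a,b]$. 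Therefore $[a,b]\times\{0\}=\bigl([a_A,b_A]\times\{0\}\bigr)\cup\bigl([a_B,b_B]\times\{0\}\bigr)\subseteq A\cup B=D$, which is precisely the second condition; Theorem \ref{Tkonveksniskupovi} then declares $D$ convex.

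The step I expect to require the most care is this second condition: turning a single common point into the overlap of the two axis-shadow intervals is the one place the intersection hypothesis is genuinely used, and it is what fails in the disjoint case. One should also record the standing restriction in Theorem \ref{Tkonveksniskupovi}, which is stated only for sets that are not balls: the degenerate situations in which $A$, $B$, or $A\cup B$ happens to be a ball must be checked separately, working directly from Lemma \ref{MetrickiSegment} and the decomposition of a metric segment into its two vertical pieces and its horizontal piece on the axis. In each such case the same phenomenon recurs — a common point of $A$ and $B$ forces the horizontal pieces to line up into a single interval — so no genuinely new idea is needed beyond the interval-overlap observation above.
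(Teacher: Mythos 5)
Your route is genuinely different from the paper's. You deduce the statement from the characterization theorem (Theorem \ref{Tkonveksniskupovi}), verifying its two conditions for $D=A\cup B$, whereas the paper never invokes that theorem: it argues directly from the definition of convexity, decomposing the metric segment $[v_1,v_2]$ (for $v_1\in A$, $v_2\in B$) via Lemma \ref{MetrickiSegment} into $M_1\cup M_2\cup M_3$, and using a common point $v^*=(x^*,y^*)\in A\cap B$ to split the horizontal piece $M_3$ as $\{(b,0)\mid x_1\le b\le x^*\}\subseteq A$ together with $\{(b,0)\mid x^*\le b\le x_2\}\subseteq B$. In the main case, where none of $A$, $B$, $A\cup B$ is a ball, your argument is correct, and your interval-overlap observation is precisely the ``dual'' of the paper's splitting of $M_3$ at $x^*$; what your reduction buys is that the segment-level bookkeeping is done once and for all inside Theorem \ref{Tkonveksniskupovi}, while the paper's direct argument buys uniformity, since it makes no case distinction about balls at all.

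The genuine gap is that the ball cases are only announced, not proved, and your closing claim that no new idea is needed there is too optimistic. Theorem \ref{Tkonveksniskupovi} is simply unavailable for a ball, and the problematic balls are those missing the axis: for $|y_0|\ge r$ the ball $B((x_0,y_0),r)$ is the vertical segment $\{x_0\}\times(y_0-r,y_0+r)$, which is convex yet violates your condition 1, since for a point $(x_0,y)$ in it the drop $[(x_0,0),(x_0,y)]$ is not contained in it. So if, say, $A$ is such a ball, verifying condition 1 for $D$ is no longer ``essentially no work'': one must use the common point $v^*\in A\cap B$ a second time, to show that $B$ supplies the missing lower portion $\{x_0\}\times[0,y^*]$ of the drop and that this, joined with the portion inside $A$, covers all of $[(x_0,0),(x_0,y)]$ --- a structurally different use of the intersection hypothesis than your interval-overlap step, and one that itself requires knowing how $B$ behaves (is it a ball or not), hence further subcases. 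A complete write-up must also handle the possibility that $A\cup B$ is a ball while $A$ and $B$ are not (e.g.\ the two halves $x\le x_0$ and $x\ge x_0$ of a ball centered on the axis), where the ``if'' direction of Theorem \ref{Tkonveksniskupovi} cannot be invoked for $D$ and convexity of a ball must be checked directly. It is worth noting that the paper's own proof quietly glosses the same point --- it asserts $M_1\subseteq A$ ``because of the convexity of $A$'', which fails for exactly these off-axis balls --- but since your proof explicitly rests on a theorem whose hypothesis excludes balls, the burden of these cases falls on you in a way it formally does not in the paper's direct approach.
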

\begin{proof}
Let $A$ and $B$ be convex sets in $\rR^2$ with river metric and let $A\cap B\neq \varnothing$. Let $v_1=(x_1,y_1)$ and $v_2=(x_2,y_2)$ be arbitrary elements from $A\cup B \subset \rR^2$. If $v_1,v_2 \in A$ or $v_1,v_2\in B$, it is clear due to the convexity of the sets that it will be $[v_1,v_2]\subset A\cup B$. So, suppose that $v_1\in A$ and $v_2\in B$. Based on Lemma \ref{MetrickiSegment} we have $[v_1,v_2]=M_1\cup M_2\cup M_3$,
where are $M_1=\{(x_1,a) \ | \ a\in [0,y_1] (\textrm{or } a\in [y_1,0])\}$, $M_2=\{(x_2,a) \ | \ a\in [0,y_2] (\textrm{or } a\in [y_2,0])\}$ and $M_3=\{(b,0) \ | \ x_1\leq b\leq x_2\}$. Because of the convexity of the set $A$ that is $M_1\subseteq A$, and analogously because of the convexity of the set $B$ that is $M_2\subseteq B$. Let now $v^*=(x^*,y^*)\in A\cap B$ arbitrary. Again, due to the convexity of the set $A$, $\{(b,0) \ | \ x_1\leq b\leq x^*\}\subseteq A$ and $\{(b,0) \ | \ x^*\leq b\leq x_2\}\subseteq B$. Now we have
\[
\{(b,0) \ | \ x_1\leq b\leq x^*\}\cup \{(b,0) \ | \ x^*\leq b\leq x_2\}=M_3 \subseteq A\cup B.
\]
Thus, $[v_1,v_2]\subseteq A \cup B$, and $A \cup B$ is a convex set.
\end{proof}

There are other ways haw to define the notion of the convexity on a metric space. For example,  Menger in \cite{article.8} introduced so-called Menger convexity of a metric space.

\begin{definition}[\cite{article.8}]\label{Menger konveksnost1}
A metric space $(X, d)$ is said to be Menger convex metric
space if for all $x, y\in X$, $x\neq y$, there is a point $z\in X$, $z\neq x$, $z\neq y$, such that
\[
d(x, y) = d(x, z) + d(z, y).
\]
\end{definition}
Menger convexity is also defined by the following definition which is equivalent to the Definition \ref{Menger konveksnost1}.
\begin{definition}[\cite{article.12}]
A metric space $(X, d)$ is said to be a Menger convex metric space if for all $x,y\in X$, $x\neq y$, $0\leq r\leq d(x,y)$ it holds
\[
B[x,r]\cap B[y,d(x,y)-r]\neq \varnothing .
\]
\end{definition}

$W$-convex metric spaces are Menger convex (\cite{article.16}), which in our case means that the metric space $(\rR^2,d^*)$ is Menger convex metric space.

\section{Different types of convexity of the space $(\rR^2,d^*)$}
\begin{definition}[\cite{article.3}]
A metric space $(X, d)$ is called metrically convex if for any two distinct points $v_1, v_2 \in X$ and for some positive real numbers $a, b$
such that $d(v_1, v_2) = a + b$ there exists a point $v \in X$ such that $d(v_1, v) = a$ and $d(v,v_2) = b$. If such a point exists for any decomposition $d(v_1,v_2) = a + b$, $a, b> 0$, then $X$ is called totally convex.
\end{definition}

It is not difficult to see that the above definition of metrically convexity of the space is equivalent to the fact that for every two dissimilar points $x,y\in X$, there is a point $z\in X$, $z\neq x,y$, such that $d(x,y)=d(x,z)+d(z,y)$ (\cite{article.3}). In a complete metric space, the concepts of metric convexity and segment space are equivalent, and $(R^2, d^*)$ is a metrically convex space.

\begin{theorem}
$(\rR^2,d^*)$ is totally convex metric space.
\end{theorem}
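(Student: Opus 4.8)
The plan is to exploit the $W$-convex structure already constructed on $(\rR^2,d^*)$, which assigns to every pair $v_1,v_2\in\rR^2$ and every $\lambda\in[0,1]$ a point $W(v_1,v_2,\lambda)$ satisfying the distance identities (\ref{uslov1}) and (\ref{uslov2}). Total convexity asks, for distinct $v_1,v_2$ and an \emph{arbitrary} splitting $d^*(v_1,v_2)=a+b$ with $a,b>0$, for a point realizing these two prescribed distances; I would produce such a point directly as a suitable value of $W$, with no new geometry needed.

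Concretely, I would fix distinct points $v_1,v_2$ and a decomposition $d^*(v_1,v_2)=a+b$ with $a,b>0$. Setting $\lambda=\frac{b}{d^*(v_1,v_2)}=\frac{b}{a+b}$, I observe that $0<\lambda<1$ precisely because $a>0$ and $b>0$, so $\lambda$ is an admissible parameter. Defining $v:=W(v_1,v_2,\lambda)$, condition (\ref{uslov1}) yields $d^*(v_1,v)=(1-\lambda)d^*(v_1,v_2)=a$, and condition (\ref{uslov2}) yields $d^*(v,v_2)=\lambda d^*(v_1,v_2)=b$, which is exactly what total convexity demands. Since $a>0$ and $b>0$, the point $v$ is automatically distinct from both $v_1$ and $v_2$, so it also meets the nondegeneracy built into the underlying notion of metric convexity.

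The only point requiring care is that $W(v_1,v_2,\lambda)$ genuinely exists for this $\lambda$, but this is already guaranteed: the explicit three-case construction (cases (a), (b), (c)) used to verify that $W$ is a convex structure exhibits, for every $\lambda\in[0,1]$, a concrete point $z=(z_x,z_y)\in\rR^2$ meeting (\ref{uslov1})--(\ref{uslov2}). Hence no gap remains. Because the argument imposes no restriction on how the length $d^*(v_1,v_2)$ is split---every choice of $a,b>0$ with $a+b=d^*(v_1,v_2)$ produces an admissible $\lambda$---the stronger ``for any decomposition'' clause holds, so the space is totally convex and not merely metrically convex. I do not expect a genuine obstacle: the substantive work was already done in building $W$, and the map $\lambda\mapsto d^*\bigl(v_1,W(v_1,v_2,\lambda)\bigr)=(1-\lambda)d^*(v_1,v_2)$ runs continuously and bijectively over the whole interval $[0,d^*(v_1,v_2)]$, so every prescribed value $a$ in that range is attained.
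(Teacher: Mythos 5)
Your proof is correct, but it takes a genuinely different route from the paper's. The paper argues directly and geometrically: given the decomposition $d^*(v_1,v_2)=a+b$, it produces the required point by an explicit three-case analysis according to where $a$ falls along the metric segment --- if $0<a\leq|y_1|$ the witness is $(x_1,y_1\mp a)$ on the vertical segment through $v_1$; if $|y_1|<a\leq|y_1|+|x_1-x_2|$ it is $(a-|y_1|+x_1,0)$ on the river; otherwise it is $(x_2,y_2\mp b)$ on the vertical segment through $v_2$ --- and in each case verifies one distance by computation and gets the other from the fact that the chosen point lies on $[v_1,v_2]$. You instead reuse the $W$-convex structure of Section 2: with $\lambda=b/(a+b)\in(0,1)$, conditions (\ref{uslov1}) and (\ref{uslov2}) give $d^*(v_1,W(v_1,v_2,\lambda))=a$ and $d^*(W(v_1,v_2,\lambda),v_2)=b$ at once. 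This is shorter and in effect proves the more general statement that every $W$-convex metric space is totally convex; indeed the equalities (\ref{uslov1})--(\ref{uslov2}) are not even special to this particular $W$, since for any Takahashi convex structure they follow from (\ref{DW_konveksnastruktura}) with $u=x$ and $u=y$ combined with the triangle inequality. What the paper's approach buys is self-containedness and explicitness: it exhibits concrete witness points and does not depend on the Section 2 verification, which is written out only under the normalization $0\leq x_1<x_2$. By leaning on the existence of $W(v_1,v_2,\lambda)$ you silently inherit the case $x_1=x_2$, which Section 2 does not treat explicitly; it is easy (the witness is the obvious point on the vertical segment joining $v_1$ and $v_2$), but for completeness you should either note it or point to the paper's blanket claim that $W$ is defined on all of $\rR^2\times\rR^2\times[0,1]$.
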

\begin{proof}
Let $v_1=(x_1,y_1)$ and $v_2=(x_2,y_2)$ be different points in $\rR^2$ and let $a,b>0$ be arbitrary, such that $d^*(v_1,v_2)=a+b$. This means that  $0<a,b< d^*(v_1,v_2)$. We consider three cases.\\
Let $0<a\leq |y_1|$. Let us choose $v=(x_1,y_1-a)$ if $y_1>0$ or $v=(x_1,y_1+a)$ if $y_1<0$. Then
\[
d^*(v_1,v)=|y_1-(y_1-a)|=a=|y_1-(y_1+a)|.
\]
Since $v$ is the point of the metric segment $[v_1,v_2]$, i.e. $d^*(v_1,v)+d^*(v,v_2)=d^*(v_1,v_2)$, it is clear that $d^*(v,v_2)=b$.

\noindent Now let $|y_1|<a\leq |y_1|+|x_1-x_2|$. Let us choose the point $v=(a-|y_1|+x_1,0)$. Therefore
\[
d^*(v_1,v)=|y_1|+|(a-|y_1|+x_1)-x_1|=|y_1|+|a-|y_1||=|y_1|+a-|y_1|=a,
\]
and again due to the fact that $v$ is the point of the metric segment, we have $d^*(v,v_2)=b$.

\noindent At the end, let $|y_1|+|x_1-x_2|< a < |y_1|+|y_2|+|x_1-x_2|$. If we choose $v=(x_2,y_2-b)$, if $y_2>0$ or $v=(x_2,y_2+b)$, if $y_2<0$, we have
\[
d^*(v_2,v)=|y_2-(y_2-b)|=b=|y_2-(y_2+b)|.
\]
Since $v\in [v_1,v_2]$, it remains that $d^*(v_1,v)=a$.
\end{proof}

\begin{definition}[\cite{article.1}]
Let $(X, d)$ be a metric space admitting a midpoint map. $(X, d)$ is called ball convex if and only if
\begin{equation}\label{kuglakonveksnost}
d(m(x,y),z)\leq \max \{d(x,z),d(y,z)\}.
\end{equation}
for all $x,y,z\in X$ and for any midpoint $m$ of $(X,d)$. It is called strictly ball convex if the inequality in (\ref{kuglakonveksnost}) is strict whenever $x\neq y$.
\end{definition}

\begin{lemma}[\cite{article.1}]\label{citiranalema}
In geodesic metric space the condition (\ref{kuglakonveksnost}) holds for arbitrary $x,y,z\in X$ if and only if it holds for arbitrary $x,y,z\in X$ for which $d(x,z)=d(y,z)$.
\end{lemma}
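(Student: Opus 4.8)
The plan is as follows. One direction is immediate: if (\ref{kuglakonveksnost}) holds for all $x,y,z\in X$, then in particular it holds for those triples with $d(x,z)=d(y,z)$. So the entire content lies in the converse, and this is where the geodesic structure must be used.

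For the converse, assume (\ref{kuglakonveksnost}) holds whenever $d(x,z)=d(y,z)$. Fix arbitrary $x,y,z\in X$ and let $m$ be any midpoint of $x$ and $y$; without loss of generality assume $R:=d(x,z)\geq d(y,z)$, so that $\max\{d(x,z),d(y,z)\}=R$. Since $d(x,m)+d(m,y)=d(x,y)$, I can concatenate a geodesic from $x$ to $m$ with one from $m$ to $y$ to obtain a constant-speed geodesic $\gamma\colon[0,1]\to X$ with $\gamma(0)=x$, $\gamma(1)=y$ and $\gamma(\tfrac12)=m$. Define the continuous function $g(t)=d(\gamma(t),z)$. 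It then suffices to prove the stronger statement that $g(t)\leq R$ for all $t\in[0,1]$, since evaluating at $t=\tfrac12$ yields $d(m,z)\leq R$, which is exactly (\ref{kuglakonveksnost}) for the triple $x,y,z$.

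To prove $g\leq R$ on $[0,1]$, I would argue by contradiction and examine the open superlevel set $T=\{t\in[0,1]:g(t)>R\}$. Because $g(0)=R$ and $g(1)=d(y,z)\leq R$, the endpoints lie outside $T$, so $T$ is an open subset of $(0,1)$; assuming $T\neq\varnothing$, pick a connected component $(\alpha,\beta)$. By continuity and maximality of the component, $g(\alpha)=g(\beta)=R$, i.e. $d(\gamma(\alpha),z)=d(\gamma(\beta),z)$. The restriction of $\gamma$ to $[\alpha,\beta]$ is a geodesic joining $\gamma(\alpha)$ and $\gamma(\beta)$, whose parameter midpoint $\gamma(\tfrac{\alpha+\beta}{2})$ satisfies $d(\gamma(\alpha),\gamma(\tfrac{\alpha+\beta}{2}))=d(\gamma(\tfrac{\alpha+\beta}{2}),\gamma(\beta))=\tfrac12 d(\gamma(\alpha),\gamma(\beta))$, hence is a genuine metric midpoint of $\gamma(\alpha)$ and $\gamma(\beta)$. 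Applying the isosceles hypothesis (which holds for any midpoint) to the equidistant pair $\gamma(\alpha),\gamma(\beta)$ gives $g(\tfrac{\alpha+\beta}{2})\leq R$; but $\tfrac{\alpha+\beta}{2}\in(\alpha,\beta)\subset T$ forces $g(\tfrac{\alpha+\beta}{2})>R$, a contradiction. Hence $T=\varnothing$ and $g\leq R$, completing the proof.

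I expect the main obstacle to be the verification that the parameter midpoint of the sub-geodesic $\gamma|_{[\alpha,\beta]}$ is a legitimate metric midpoint of its endpoints, so that the equidistant hypothesis genuinely applies; this rests on the constant-speed parametrization and on the hypothesis being formulated for an arbitrary midpoint map. A secondary point to check carefully is that every midpoint $m$ of $x,y$ indeed arises as $\gamma(\tfrac12)$ for a suitable geodesic, which is what lets the bound on $g$ transfer to the originally given midpoint.
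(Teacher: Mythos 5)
The paper offers no proof of this lemma at all: it is stated as a quotation from Foertsch \cite{article.1}, so there is no internal argument to compare yours against. Judged on its own, your proof is correct and complete, and it supplies exactly what the paper imports as a black box. The trivial direction is handled correctly, and the converse is sound. The one step you assert rather than justify is that the concatenation of a geodesic from $x$ to $m$ with one from $m$ to $y$ is again a constant-speed geodesic; this does hold, since for $s\leq \tfrac12\leq t$ the triangle inequality gives $d(\gamma(s),\gamma(t))\leq (t-s)\,d(x,y)$ while $d(x,y)\leq d(x,\gamma(s))+d(\gamma(s),\gamma(t))+d(\gamma(t),y)$ forces the reverse inequality, so $d(\gamma(s),\gamma(t))=|t-s|\,d(x,y)$; in particular every metric midpoint of $x,y$ really does arise as $\gamma(\tfrac12)$ of such a geodesic, which is the point you flagged as needing care. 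The superlevel-set argument is also watertight: the endpoints $\alpha,\beta$ of a component of $T$ lie outside $T$ yet are limits of points of $T$, so $g(\alpha)=g(\beta)=R$ by continuity; the constant-speed parametrization makes $\gamma\bigl(\tfrac{\alpha+\beta}{2}\bigr)$ a genuine metric midpoint of $\gamma(\alpha)$ and $\gamma(\beta)$; and since ball convexity in \cite{article.1} is demanded for an arbitrary midpoint map, any individual metric midpoint can be realized by some midpoint map, so the equidistant hypothesis applies and produces the contradiction. Your argument thus stands as a self-contained, elementary proof (continuity plus a connectedness argument on $[0,1]$) of a result the paper only cites.
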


\begin{theorem}\label{KuglaKonveksan}
The space $(\rR^2,d^*)$ is strictly ball convex.
\end{theorem}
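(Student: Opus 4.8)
The plan is to use that $(\rR^2,d^*)$ is a geodesic space with a single, explicitly known midpoint map, and then to cut down the number of configurations by the reduction in Lemma \ref{citiranalema}. First I would observe that the metric segments of Lemma \ref{MetrickiSegment} are genuine geodesics and that, moving along such a segment, the distance to $v_1$ increases continuously and monotonically from $0$ to $d^*(v_1,v_2)$; hence there is exactly one point at distance $\tfrac12 d^*(v_1,v_2)$ from each endpoint. Consequently the midpoint is unique, so the only midpoint map is $m(v_1,v_2)=W(v_1,v_2,\tfrac12)$, whose coordinates are obtained by setting $\lambda=\tfrac12$ in the three cases (a), (b), (c) computed after (\ref{uslov1})--(\ref{uslov2}). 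In particular, when $x_1<x_2$ the midpoint sits either on the vertical ray above $x_1$, or on the river $y=0$ at abscissa $m_x=\tfrac12(x_1+x_2-|y_1|+|y_2|)$, or on the vertical ray above $x_2$.

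Next I would reduce to a balanced configuration. Since the space is geodesic, Lemma \ref{citiranalema} lets me restrict the verification of (\ref{kuglakonveksnost}) to triples with $d^*(v_1,v_3)=d^*(v_2,v_3)$, and I will argue (see the obstacle below) that, geodesics here being extendable, the reduction also applies to the strict inequality. Granting this, it suffices to fix $v_1\neq v_2$ together with a point $v_3$ satisfying $d^*(v_1,v_3)=d^*(v_2,v_3)=:R$ and to prove $d^*(m(v_1,v_2),v_3)<R$.

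For this I would write $v_i=(x_i,y_i)$, assume without loss of generality $x_1\le x_2$, and split according to whether $x_1=x_2$ or $x_1<x_2$, further distinguishing the location of $m$ (vertical above $x_1$, river, or vertical above $x_2$) and the position of $x_3$ relative to $x_1,x_2$. The balance condition then collapses to a simple identity; for instance, when $x_3\notin\{x_1,x_2\}$ it reads $|y_1|+|x_1-x_3|=|y_2|+|x_2-x_3|$, which in turn pins down the midpoint (e.g. it forces $m_x=x_3$ when $x_1<x_3<x_2$, and $m=(x_1,0)$ when $x_3<x_1$). Substituting the explicit coordinates of $m$ into the two-branch definition of $d^*$, I expect that in every case the slack $R-d^*(m,v_3)$ simplifies to a manifestly positive quantity — one of $|y_1|$, $|y_2|$, or a strictly positive horizontal or vertical gap forced by $v_1\neq v_2$ — which delivers the strict inequality.

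The step I expect to be the main obstacle is the degenerate situation in which $m$ lies on the river and $v_3$ occupies the same column as $m$: there $d^*(m,v_3)$ collapses to the bare vertical distance $|y_3|$, and one must still read off a strictly positive slack $R-|y_3|$ from the identity tying $|y_1|,|y_2|,x_1,x_2,x_3$ together. A secondary point demanding care is the passage from the non-strict reduction of Lemma \ref{citiranalema} to its strict counterpart; I would try to justify it through the monotone arc-length parametrization of the (extendable) geodesics used above, or, failing that, bypass it by verifying the strict inequality directly in the unbalanced case through the same coordinate computation. The remaining cases are routine evaluations of $d^*$.
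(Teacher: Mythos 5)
Your proposal follows essentially the same route as the paper's proof: it uses that $(\rR^2,d^*)$ is geodesic, invokes Lemma \ref{citiranalema} to reduce to triples with $d^*(v_1,v_3)=d^*(v_2,v_3)$, identifies the unique midpoint $(\overline{x},0)=\bigl(\tfrac{x_1+x_2+|y_2|-|y_1|}{2},0\bigr)$ explicitly, and gets the strict inequality by direct computation with $d^*$ — and the slacks you predict ($|y_1|$, $|y_2|$, or a positive horizontal gap) are exactly what appear in the paper. Your extra care about transferring the lemma's reduction to the \emph{strict} inequality and about the degenerate configurations (midpoint off the river, $v_3$ in the same column as $m$) goes beyond the paper, which silently passes over both points, but it does not constitute a different approach.
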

\begin{proof}
We have already established that $(\rR^2,d^*)$ is a geodesic metric space (\cite{article.11}), so we will use condition of Lemma \ref{citiranalema} for the proof. Let $v_1=(x_1,y_1), v_2=(x_2,y_2), (x,y)\in \rR^2$, $x_1\neq x_2$, such that $(x,y)\in A$, where $A$ is the set of all
points that are equidistant from points $v_1$ and $v_2$, i.e.
\[
d^*((x_1,y_1),(x,y))=d^*((x_2,y_2),(x,y)).
\]
The point $(\overline{x},\overline{y})=\left(\dfrac{|y_2|+x_2-|y_1|+x_1}{2},0\right)$ is the middle of the metric segment $[v_1,v_2]$ (Theorem 2.1 , \cite{article.11}), hence $m(v_1,v_2)=(\overline{x},\overline{y})$. Since $(x,y)\in A$, we have $d^*((\overline{x},\overline{y}),(x,y))=|y-\overline{y}|=|y|$, and from the definition of distance we have
\[
d^*((x_1,y_1),(x,y))=|y_1|+|y|+|x-x_1|,
\]
\[
d^*((x_2,y_2),(x,y))=|y_2|+|y|+|x-x_2|.
\]
It is obvious that following holds
\[
d^*((\overline{x},\overline{y}),(x,y))=|y|\leq \max \{d^*((x_1,y_1),(x,y)),d^*((x_2,y_2),(x,y))\},
\]
and because of the assumption $x_1<x_2$ the strict inequality also holds.

In the case that $x_1=x_2$, the point $(\overline{x},\overline{y})=\left(x_1,\displaystyle \frac{y_1+y_2}{2}\right)$ is the only point of the set $A$ (Theorem 2.2 , \cite{article.11}), and thus the middle of the metric segment, and the condition (\ref{kuglakonveksnost}) is trivially satisfied. Assuming that $v_1\neq v_2$ we have
\[
d^*((\overline{x},\overline{y}),(x,y))=0 < |y-y_1|=|y-y_2|= \max \{d^*((x_1,y_1),(x,y)),d^*((x_2,y_2),(x,y))\},
\]
which proves the theorem.
\end{proof}

\begin{definition}[\cite{article.1}]
Let $(X, d)$ be a metric space admitting a midpoint map. $(X, d)$ is called distance convex if and only if
\begin{equation}\label{distancekonveksnost}
d(m(x,y),z)\leq \dfrac{1}{2}(d(x,z)+d(y,z)).
\end{equation}
for all $x,y,z\in X$ and for any midpoint $m$ of $(X,d)$. It is called strictly distance convex if and only if the inequality
(\ref{distancekonveksnost}) is strict for all $x, y, z \in X$ satisfying $d(x,y) > |d(x,z) - d(y,z)|$.
\end{definition}

Distance convexity implies ball convexity (Proposition 3. \cite{article.1}) and in the general case, the converse is not valid. Under certain assumptions, the converse of the statement will hold. For example, in complete Riemannian manifolds this two convexity are equivalent.

\begin{definition}[\cite{article.1}]\label{Dnpbc}
A geodesic metric space $(X,d)$ is said to be globally non positively Busemann curved (NPBC) if for any three points $x,y,z \in X$ and midpoints $m(x, z)$ and $m(y, z)$ it holds
\begin{equation}\label{npbc}
d(m(x,z),m(y,z))\leq \dfrac{1}{2}d(x,y).
\end{equation}
\end{definition}

\begin{theorem}[\cite{article.1}]\label{Tvezabksadkunpbc}
A global non positively Busemann curved geodesic metric space is (strictly) ball convex if and only if it is (strictly) distance convex.
\end{theorem}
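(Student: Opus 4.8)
The plan is to prove the two implications separately, observing that only one of them requires the NPBC hypothesis. I would first dispose of the easy direction, that distance convexity implies ball convexity. This is immediate from the elementary inequality $\frac{1}{2}(p+q)\leq \max\{p,q\}$ with $p=d(x,z)$ and $q=d(y,z)$; chaining it after (\ref{distancekonveksnost}) yields (\ref{kuglakonveksnost}). For the strict versions I would check that the chain is strict: if $d(x,z)\neq d(y,z)$ then already $\frac{1}{2}(p+q)<\max\{p,q\}$, while if $d(x,z)=d(y,z)$ and $x\neq y$ then $d(x,y)>0=|d(x,z)-d(y,z)|$, so strict distance convexity forces the strict inequality. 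In either situation strict ball convexity follows.

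For the converse, that ball convexity implies distance convexity, the NPBC hypothesis is essential, and I would argue as follows. Fix $x,y,z$ and assume without loss of generality that $a:=d(x,z)\leq b:=d(y,z)$. Using that $X$ is geodesic, choose $y'$ on a geodesic from $z$ to $y$ at distance $a$ from $z$, so that $d(y',z)=d(x,z)=a$ and $d(y,y')=b-a$. Now $x$ and $y'$ are equidistant from $z$, so the hypothesis of Lemma \ref{citiranalema} is met and ball convexity gives $d(m(x,y'),z)\leq a$.

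The key step is to transport this estimate from $m(x,y')$ to $m(x,y)$ via NPBC. Since $m(x,y)$ and $m(x,y')$ are midpoints sharing the common endpoint $x$, condition (\ref{npbc}) of Definition \ref{Dnpbc} applied with the roles $x\mapsto y$, $y\mapsto y'$, $z\mapsto x$ gives
\[
d(m(x,y),m(x,y'))\leq \tfrac{1}{2}d(y,y')=\tfrac{1}{2}(b-a).
\]
A single triangle inequality then yields
\[
d(m(x,y),z)\leq d(m(x,y),m(x,y'))+d(m(x,y'),z)\leq \tfrac{1}{2}(b-a)+a=\tfrac{a+b}{2},
\]
which is exactly (\ref{distancekonveksnost}). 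For the strict claim I would note that the hypothesis $d(x,y)>|d(x,z)-d(y,z)|=b-a$ means precisely that $x$ does not lie on a geodesic from $z$ to $y$, hence $x\neq y'$; strict ball convexity then upgrades the middle estimate to $d(m(x,y'),z)<a$, and the final chain becomes strict.

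I expect the main obstacle to be the correct choice of common endpoint in the NPBC inequality: the contraction (\ref{npbc}) is only available for midpoints sharing an endpoint, so the argument hinges on comparing $m(x,y)$ and $m(x,y')$, which share $x$, rather than attempting to relate midpoints through $z$. A secondary point requiring care is the placement of $y'$ and the identity $d(y,y')=b-a$, which rely on the geodesic hypothesis; one should also keep in mind that midpoints need not be unique, so all estimates must be read relative to a fixed midpoint map, in accordance with the statement of Theorem \ref{Tvezabksadkunpbc}.
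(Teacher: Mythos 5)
Your proof is correct, but note that the paper itself offers no proof of this statement: Theorem \ref{Tvezabksadkunpbc} is imported verbatim from Foertsch \cite{article.1} and used as a black box (to deduce distance convexity of $(\rR^2,d^*)$ from ball convexity and the NPBC property). So there is no in-paper argument to compare against; what you have produced is a self-contained reconstruction of the cited result, and it is sound. The easy direction is exactly the inequality $\tfrac{1}{2}(p+q)\leq\max\{p,q\}$, with your two-case analysis correctly delivering the strict version (when $p\neq q$ the arithmetic mean is strictly below the maximum; when $p=q$ and $x\neq y$ the hypothesis $d(x,y)>|d(x,z)-d(y,z)|=0$ of strict distance convexity is active). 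The converse is where the content lies, and your three moves are the right ones: (i) place $y'$ on a geodesic from $z$ to $y$ with $d(z,y')=d(z,x)=a$, so $d(y,y')=b-a$; (ii) apply ball convexity to the equidistant triple $x,y',z$ to get $d(m(x,y'),z)\leq a$ (you invoke Lemma \ref{citiranalema} here, but strictly speaking you do not need it--ball convexity holds for all triples by hypothesis, equidistant ones included); (iii) use condition (\ref{npbc}) on the two midpoints sharing the endpoint $x$ to get $d(m(x,y),m(x,y'))\leq\tfrac{1}{2}(b-a)$, and close with the triangle inequality. Your strictness bookkeeping is also right: $d(x,y)>b-a=d(y,y')$ forces $x\neq y'$, so strict ball convexity makes step (ii) strict, and the degenerate configurations ($a=0$, or $a=b$) are excluded or harmless exactly as you implicitly treat them. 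The one cosmetic caveat is that applying Definition \ref{Dnpbc} with the shared point $x$ presumes the midpoint map is symmetric, $m(x,y)=m(y,x)$, which is the standing convention for midpoint maps here and costs nothing.
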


\begin{theorem}
The space $(\rR^2,d^*)$ is NPBC metric space.
\end{theorem}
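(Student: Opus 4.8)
\emph{Proof proposal.} The plan is to compute the two midpoints explicitly and then estimate the distance between them, exploiting the tree-like structure of $(\rR^2,d^*)$. Since between any two points there is a unique geodesic (the vertical segments down to the $x$-axis, joined along the $x$-axis, as described in Lemma \ref{MetrickiSegment}), the midpoint map $m$ is single-valued, so it suffices to verify (\ref{npbc}) for this one map. First I would record $m$ explicitly from Lemma \ref{MetrickiSegment}: for $u=(u_1,u_2)$, $v=(v_1,v_2)$ with $u_1=v_1$ one has $m(u,v)=\left(u_1,\frac{u_2+v_2}{2}\right)$, while for $u_1\neq v_1$, writing $L=d^*(u,v)$, the midpoint is the point at arclength $L/2$ from $u$ along the geodesic; it lies on $u$'s vertical segment when $|u_2|\geq|v_2|+|u_1-v_1|$, on $v$'s vertical segment when $|v_2|\geq|u_2|+|u_1-v_1|$, and otherwise on the $x$-axis, where it equals $\left(\frac{u_1+v_1-|u_2|+|v_2|}{2},0\right)$ (for $u_1<v_1$), the point already used in Theorem \ref{KuglaKonveksan}.

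The conceptual heart of the argument is that $(\rR^2,d^*)$ is an $\rR$-tree: the $x$-axis is a geodesic line and each vertical line is attached to it only at its foot $(c,0)$, so no geodesic triangle encloses area. Concretely, given $x=(x_1,y_1)$, $y=(x_2,y_2)$, $z=(x_3,y_3)$, I would locate the median point $o$ at which the three geodesics $[x,y]$, $[x,z]$, $[y,z]$ meet; in the principal case of distinct $x$-coordinates $o$ is the foot $(x_m,0)$, where $x_m$ is the median of $x_1,x_2,x_3$, with suitable modifications when some $x$-coordinates coincide. Setting $\alpha=d^*(x,o)$, $\beta=d^*(y,o)$, $\gamma=d^*(z,o)$, the tripod identities $d^*(x,y)=\alpha+\beta$, $d^*(x,z)=\alpha+\gamma$, $d^*(y,z)=\beta+\gamma$ then hold, since $o$ lies on all three geodesics.

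With this in hand the estimate is immediate. The geodesics $[x,z]$ and $[y,z]$ share the segment $[z,o]$ of length $\gamma$. The midpoint $m(x,z)$ sits at distance $\frac{\alpha+\gamma}{2}$ from $z$, hence on $[z,o]$ when $\alpha\leq\gamma$ and on the leg $[o,x]$ at distance $\frac{\alpha-\gamma}{2}$ from $o$ otherwise, and symmetrically for $m(y,z)$. Comparing the four combinations and using that the two midpoints lie either on the common geodesic through $o$ or on distinct legs emanating from $o$, one gets $d^*(m(x,z),m(y,z))$ equal to $\frac{|\alpha-\beta|}{2}$, to $\frac{\alpha-\beta}{2}$, or to $\frac{\alpha+\beta}{2}-\gamma$, each of which is at most $\frac{\alpha+\beta}{2}=\frac{1}{2}d^*(x,y)$, which is exactly (\ref{npbc}). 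For instance, if $\alpha>\gamma$ and $\beta>\gamma$ the midpoints lie on the distinct legs $[o,x]$ and $[o,y]$, so their distance is $\frac{\alpha-\gamma}{2}+\frac{\beta-\gamma}{2}=\frac{\alpha+\beta}{2}-\gamma$.

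The main obstacle is not this final estimate but the bookkeeping needed to make $o$ precise: the formula for $o$ and the verification of the tripod identities split into cases according to the ordering and possible coincidences of $x_1,x_2,x_3$, and, when two points share an $x$-coordinate, according to the signs of their $y$-coordinates, since a common $x$-coordinate with aligned signs forces $o$ off the $x$-axis (indeed onto a vertical segment, so that one point already lies on the geodesic between the other two and one of $\alpha,\beta,\gamma$ vanishes). I expect most of the work to consist of checking that each configuration yields a genuine tripod, equivalently that $(\rR^2,d^*)$ satisfies the $0$-hyperbolic four-point condition; the degenerate configurations are then covered by the same computation, with the reduction to a single vertical line handled by the elementary real-line identity $\left|\frac{x+z}{2}-\frac{y+z}{2}\right|=\frac{|x-y|}{2}$.
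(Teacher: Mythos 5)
Your proposal is correct, and it takes a genuinely different route from the paper --- in fact a more robust one. The paper's proof is essentially one line: it asserts that the two midpoints always lie, in order, on a geodesic from $v_1$ to $v_2$, i.e. that $d^*(v_1,m(v_1,v))+d^*(m(v_1,v),m(v_2,v))+d^*(m(v_2,v),v_2)=d^*(v_1,v_2)$, and then finishes with the triangle inequality $d^*(v_1,v)+d^*(v,v_2)\geq d^*(v_1,v_2)$. Your argument instead builds the median point $o$ of the tripod spanned by $x,y,z$, sets $\alpha=d^*(x,o)$, $\beta=d^*(y,o)$, $\gamma=d^*(z,o)$, and bounds $d^*(m(x,z),m(y,z))$ case by case; your formulas and the three case values $\frac{|\alpha-\beta|}{2}$, $\frac{\alpha-\beta}{2}$, $\frac{\alpha+\beta}{2}-\gamma$ check out against the explicit river-metric geodesics, and each is indeed at most $\frac{\alpha+\beta}{2}=\frac{1}{2}d^*(x,y)$. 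The comparison is instructive: the paper's collinearity identity holds exactly in your third case ($\alpha\geq\gamma$ and $\beta\geq\gamma$, both midpoints past the median) and fails otherwise. For instance, with $x=(0,1)$, $y=(2,1)$, $z=(1,100)$ both midpoints equal $(1,49)$, so the left-hand side of the paper's identity is $51+0+51=102$ while $d^*(x,y)=4$; likewise the paper's claim that each midpoint has first coordinate $x_1$ (resp.\ $x_2$) or second coordinate $0$ fails here, since a midpoint can land on the vertical through the common endpoint $z$. So what your approach buys is precisely the configurations the paper's argument does not cover: when the geodesics $[x,z]$ and $[y,z]$ share a long common segment $[o,z]$ containing one or both midpoints, the bound comes from the midpoints being close to each other (distance $\frac{|\alpha-\beta|}{2}$ or $\frac{\alpha-\beta}{2}$), not from their lying between $x$ and $y$. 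The bookkeeping you defer (verifying the tripod identities in all coincidence configurations) is routine and does go through, so your outline is a complete and correct strategy, and it fills a genuine gap in the paper's own proof.
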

\begin{proof}
Let $v_1=(x_1,y_1), \ v_2=(x_2,y_2)$ and $v=(x,y)$ be arbitrary points in $\rR^2$. By $m(v_1,v)$ and $m(v_2,v)$ we denote the midpoints of the metric segments $[v_1,v]$ and $[v_2,v]$ respectively. Because of the characteristics of the middle of the metric segment (Theorem 2.1 and Theorem 2.2, \cite{article.11}) we know that the first coordinates of the points $m(v_1,v)$ and $m(v_2,v)$ are either equal to the first coordinates of the points $v_1$ and $v_2$ respectively, or their second coordinates are 0. Therefore, we have,
\[
d^*(v_1,m(v_1,v))+d^*(m(v_1,v),m(v_2,v))+d^*(m(v_2,v),v_2)=d^*(v_1,v_2).
\]
Therefore,
\begin{align*}
d^*(m(v_1,v),m(v_2,v)) & = d^*(v_1,v_2)-(d^*(v_1,m(v_1,v))+d^*(m(v_2,v),v_2)) \\
& = d^*(v_1,v_2)-\left(\dfrac{1}{2}d^*(v_1,v)+\dfrac{1}{2}d^*(v_2,v)\right) \\
& \leq d^*(v_1,v_2)-\dfrac{1}{2}d^*(v_1,v_2)=\dfrac{1}{2}d^*(v_1,v_2).
\end{align*}
Because of the arbitrariness of the points $v_1$, $v_2$ and $v$, according to Definition \ref{Dnpbc}, we conclude  that $(\rR^2,d^*)$ is NPBC space.
\end{proof}

Based on the Theorem \ref{Tvezabksadkunpbc}, as a direct consequence we have the following statement.
\begin{corollary}\label{DistanceKonveksan}
The space $(\rR^2,d^*)$ is distance convex metric space.
\end{corollary}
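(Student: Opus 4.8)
The plan is to obtain this corollary purely by chaining together the three facts that have just been assembled, with no fresh computation on the metric itself. First I would record the three hypotheses of Theorem \ref{Tvezabksadkunpbc} that we already possess: the space $(\rR^2,d^*)$ is geodesic (established in \cite{article.11}), it is NPBC (proven in the theorem immediately preceding this corollary), and it is strictly ball convex (Theorem \ref{KuglaKonveksan}). These are exactly the ingredients that the equivalence theorem requires.

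Then I would invoke Theorem \ref{Tvezabksadkunpbc} directly: since $(\rR^2,d^*)$ is a global NPBC geodesic metric space, it is (strictly) ball convex if and only if it is (strictly) distance convex. Having verified the ball-convex side of this equivalence---indeed in its strict form---I conclude that $(\rR^2,d^*)$ is (strictly) distance convex, which in particular yields the distance convexity asserted in the corollary.

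I do not expect any genuine obstacle here, since the statement is a formal consequence of the cited equivalence; the only point demanding care is to confirm that all three hypotheses of Theorem \ref{Tvezabksadkunpbc} are truly in hand before applying it. It is worth emphasizing that, because Theorem \ref{KuglaKonveksan} furnishes \emph{strict} ball convexity, the equivalence actually delivers the stronger conclusion of strict distance convexity, of which the stated corollary is merely the non-strict specialization.
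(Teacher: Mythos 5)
Your proposal is correct and follows exactly the paper's route: the paper presents this corollary as a direct consequence of Theorem \ref{Tvezabksadkunpbc}, combining the previously established facts that $(\rR^2,d^*)$ is geodesic, NPBC, and strictly ball convex (Theorem \ref{KuglaKonveksan}). Your added observation that the equivalence in fact yields \emph{strict} distance convexity is a valid strengthening, though the paper states only the non-strict conclusion.
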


\begin{definition}[\cite{article.6}]\label{DStriktnaKonveksnost}
We say that the metric space $(X,d)$ is strictly convex if and only if for any three points $x,y,z\in X$, for which $d(x,y)=d(x,z )$, and for an arbitrary point $t$ that is metrically between the points $y$ and $z$, $d(x,t)<d(x,y)$ holds.
\end{definition}
\begin{theorem}\label{TstriktnaKonveksnost}
The space $(\rR^2,d^*)$ is a strictly convex metric space.
\end{theorem}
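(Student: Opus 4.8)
The plan is to verify Definition \ref{DStriktnaKonveksnost} directly for $(\rR^2,d^*)$ by taking three points $x,y,z$ with $d^*(x,y)=d^*(x,z)$ and an arbitrary point $t$ strictly metrically between $y$ and $z$ (so $t\in[y,z]$, $t\neq y$, $t\neq z$), and showing $d^*(x,t)<d^*(x,y)$. First I would write $x=(x_0,y_0)$, $y=(x_1,y_1)$, $z=(x_2,y_2)$, and use Lemma \ref{MetrickiSegment} to describe $[y,z]$ explicitly. The structure of the river metric forces a case split according to where $t$ lies on the segment $[y,z]$: either $t$ lies on the vertical part above $y$, on the vertical part above $z$, or on the horizontal axis portion $\{(b,0):x_1\le b\le x_2\}$ (assuming $x_1<x_2$); there is also the degenerate case $x_1=x_2$ where the segment is a single vertical interval.

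Next I would exploit the equidistance hypothesis. The quantity $d^*(x,t)$ is, by definition of the metric, an expression involving $|y_0|$, $|t_y|$, and $|x_0-t_x|$ (or just $|y_0-t_y|$ when $x_0=t_x$), and the key observation is that as $t$ moves strictly along the interior of $[y,z]$, the distance $d^*(x,t)$ is a \emph{strictly} convex-like function that is maximized at the endpoints. Concretely, since $t$ is metrically between $y$ and $z$, we have $d^*(y,t)+d^*(t,z)=d^*(y,z)$, and I would combine this betweenness relation with the triangle inequality applied through $t$: the strictness in $d^*(x,t)<\max\{d^*(x,y),d^*(x,z)\}$ already followed in the proof of Theorem \ref{KuglaKonveksan}, and here the equality $d^*(x,y)=d^*(x,z)$ turns that maximum into the common value $d^*(x,y)$, yielding $d^*(x,t)<d^*(x,y)$.

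More carefully, I would argue that a point $t$ strictly between $y$ and $z$ has at least one coordinate-distance to $x$ strictly smaller than the corresponding worst endpoint while the others do not increase. For instance, when $t=(b,0)$ on the horizontal portion with $x_1<b<x_2$, one computes $d^*(x,t)=|y_0|+|x_0-b|$, and the equidistance condition pins down the relationship between $x_0$, $x_1$, $x_2$, $y_1$, $y_2$ so that $|x_0-b|$ is strictly dominated by the endpoint contributions; the strict inequality $x_1<b<x_2$ is exactly what prevents equality. On the vertical portions the second coordinate $|t_y|$ is strictly between $0$ and $|y_i|$, again giving a strict decrease. The degenerate case $x_1=x_2$ reduces to the elementary fact that on a line segment the river metric coincides with the Euclidean absolute value, for which strict convexity in the required sense is immediate.

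The main obstacle I anticipate is the bookkeeping in the equidistance-driven case analysis: translating $d^*(x,y)=d^*(x,z)$ into usable relations among the coordinates, and then checking that in \emph{every} position of $t$ the relevant triangle-type inequality is strict rather than merely weak. The risk is a boundary configuration where $t$ sits at the junction between the horizontal and a vertical part (e.g. $t=(x_1,0)$ or $t=(x_2,0)$); I would need to confirm that such junction points, being distinct from $y$ and $z$ whenever $y_1\neq0$ or $y_2\neq0$, still yield strict inequality, and handle separately the possibility that $y$ or $z$ already lies on the axis. Once these junction and degenerate subcases are dispatched, the strictness follows uniformly, and the theorem is proved.
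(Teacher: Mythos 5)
Your proposal is correct and takes essentially the same route as the paper: decompose $[y,z]$ via Lemma \ref{MetrickiSegment} into the two vertical pieces and the horizontal piece, compute $d^*(x,t)$ explicitly from the river metric in each case, and extract strictness from the fact that $t$ is distinct from the endpoints (the paper does exactly this, under the simplifying assumption that the three points satisfy $x_1<x_2<x_3$, whereas you additionally plan to track the other positions of $x$ using the equidistance relation, which is if anything more thorough). One caveat: your intermediate appeal to Theorem \ref{KuglaKonveksan} is not valid as stated, since that theorem and its proof concern only the midpoint $m(y,z)$ and not arbitrary points of the segment, but this is harmless because your subsequent direct case analysis carries the proof on its own.
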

\begin{proof}
Let $v_1=(x_1,y_1)$, $v_2=(x_2,y_2)$ and $v_3=(x_3,y_3)$ be arbitrary from $\rR^2$ such that $d^*(v_1,v_2 )=d^*(v_1,v_3)$. Assume, without loss of generality, that $x_1<x_2<x_3$. Now let $v=(x,y)\in [v_2,v_3]$ be arbitrary. According to Lemma \ref{MetrickiSegment}, we will consider three cases. \\
\textbf{I:} Let $v\in M_1=\{(x_2,a) \ | \ a\in [0,y_2] (\textrm{ or } a\in [y_2,0])\}$. Then we have,
\[
d^*(v_1,v)=|y_1|+|x_1-x_2|+|a| < |y_1|+|x_1-x_2|+|y_2|=d^*(v_1,v_2).
\]
\noindent \textbf{II:} Let $v\in M_2=\{(x_3,a) \ | \ a\in [0,y_2] (\textrm{ or } a\in [y_2,0])\}$. Then
\[
d^*(v_1,v)=|y_1|+|x_3-x_1|+|a|<|y_1|+|x_3-x_1|+|y_2|=d^*(v_1,v_3)=d^*(v_1,v_2).
\]
\noindent \textbf{III:} If $v \in M_3=\{(b,0) \ | \ x_2\leq b\leq x_3\}$, then
\[
d^*(v_1,v)=|y_1|+|x_1-b|+|0|< |y_1|+|x_1-x_3|+|y_3|=d^*(v_1,v_3)=d^*(v_1,v_2).
\]
So, if $v\in [v_2,v_3]$, then $d^*(v_1,v)<d^*(v_1,v_2)$. According to the Definition \ref{DStriktnaKonveksnost} we conclude that $(\rR^2,d^*)$ is a strictly convex metric space.
\end{proof}

\begin{definition}[\cite{article.5}]
A metric space $(X,d)$ is called an $M$-space if and only if for all $x,y\in X$ with $d(x,y)=\lambda$ and for all $r\in [0,\lambda]$ there exists a unique $z_r\in X$ such that
$$B[x,r]\cap B[y,\lambda -r]=\{ z_r\} \ ,$$
i.e. $d(x,y)=d(x,z_r)+d(z_r,y)$.
\end{definition}

The strict convexity of the space $(\rR^2,d^*)$ additionally means that it is also an M-space (Remarks 1. \cite{article.4}, \cite{article.6}).

\begin{definition}[\cite{article.6}]
Let $(X,d)$ be an M-space. We say that a metric segment in $X$ has a unique prolongations if and only if for any four points $x,y,z,t \in X$ that satisfy the conditions
\begin{align*}
d(x,y)+d(y,z)& =d(x,z), \\
d(x,y)+d(y,t)& = d(x,t), \\
d(y,z)& =d(y,t),
\end{align*}
$z=t$ holds.
\end{definition}

\begin{definition}[\cite{article.6}]
An M-space is said to be externally convex if and only if
\[
(\forall x,y\in X, \ x\neq y)(\exists z\in X)(z\neq y \land d(x,y)+d(y,z)=d(x,z)).
\]
An M-space is said to be strongly externally convex if and only if
\[
(\forall x,y\in X, \ x\neq y, \ d(x,y)=\lambda)(\forall k > \lambda)(\exists_1z\in X) d(x,y)+d(y,z)=d(x,z)=k.
\]
\end{definition}

Now, we will analyze the property of externally convexity, and show that $\rR^2$ is externally convex but is not strongly externally convex. \\
Let $v_1=(x_1,y_1)$ and $v_2=(x_2,y_2)$ be arbitrary points in $\rR^2$, such that $v_1\neq v_2$. According to the definition of the metric, $d^*(v_1,v_2)=|y_1|+|x_2-x_1|+|y_2|$. Without loss of generality, let $y_2>0$. Let us observe the point $v=(x_2,y)$, for arbitrary $y>y_2$. Then $v\neq v_2$ and it is
\[
d^*(v_1,v)=|y_1|+|x_2-x_1|+|y|=|y_1|+|x_2-x_1|+|y_2|+|y-y_2|=d^*(v_1,v_2 )+d^*(v_2,v).
\]
Therefore, the space $(\rR^2,d^*)$ is externally convex.

Let $v_1=(x_1,y_1)$ and $v_2=(x_2,y_2)$ be arbitrary points in $\rR^2$, such that $v_1\neq v_2$. We denote by $\lambda=d^*(v_1,v_2)$. Now let $k>\lambda$. Without loss of generality, let $y_2>0$. Let us observe the points $v'=(x_2,y_2+k-\lambda)$ and $v''=(x_2,y_2-k+\lambda)$. It is valid for them
\begin{align*}
d^*(v_1,v')& =|y_1|+|x_2-x_1|+y_2+k-\lambda=|y_1|+|x_2-x_1|+|y_2|+k-\lambda=k, \\
d^*(v_1,v'')& =|y_1|+|x_2-x_1|+|y_2-k+\lambda|.
\end{align*}
For $d^*(v_1,v'')=k$, it would have to be $|y_2-k+\lambda|=y_2+k-\lambda$, and this will only be the case if $y_2=0$ . This gives us an idea for the next example. Let $v_1=(1,1)$ and $v_2=(3,0)$. Then $d^*(v_1,v_2)=3$. Let $k=4$. Let us consider the points $v'=(3,1)$ and $v''=(3,-1)$. Now we have $d^*=(v_1,v')=4$ and $d^*(v_1,v_2)+d^*(v_2,v')=3+1=4$. Also, $d^*(v_1,v'')=4$ and $d^*(v_1,v_2)+d^*(v_2,v'')=3+1=4$. So, there is no unique point $v\in \rR^2$ such that $d^*(v_1,v_2)+d^*(v_2,v')=d^*(v_1,v)=k$, and the space $(\rR^2,d^*)$ is not strongly externally convex. \\
In normed linear spaces strict convexity implies that metric segments have unique prolongations (Proposition 2, \cite{article.4}). In a strictly convex metric spaces, metric segment need not to have unique prolongations. One of the examples is obviously the space $(\rR^2,d^*)$ which is strictly convex $M$-space whose metric segments don't have unique prolongations.

In the notation and terminology from \cite{article.2}, the results of Theorem \ref{KuglaKonveksan} and Corollary \ref{DistanceKonveksan} mean that  $(\rR^2,d^*)$ is $\infty$-convex, respectively $1$-convex metric space. This again, due to the fact that our observed space is NPBC, means that $(\rR^2,d^*)$ $p$-convex for all $p\in [1,\infty]$ (Corollary 5, \cite{article.2}).

\section{NUC and UC of the space $(\rR^2,d^*)$}

Let $D\subset \mathbb{R}^2$ be a bounded set. Kuratowski measure of noncompactness $\alpha$ is defined as the infimum of all positive numbers $\varepsilon$ such that the set $D$ can be covered by finitely many sets of diameter not larger than $\varepsilon$, and Hausdorff measure of noncompactness $\chi$ is the infimum of all positive numbers $\varepsilon$ such that the set $D$ can be covered by finitely many balls of radius not larger than $\varepsilon$.

\begin{definition}[\cite{bugajewski1998measures}]
Let $D$ be an arbitrary bounded subset in $\mathbb{R}^2$. We say that $y'\in \mathbb{R}$ satisfies:
\begin{enumerate}
   \item condition $A^*(D)$, if for each $y<y'$ there is at least countably many (not finitely many) points $v_n=(x_n,y_n)\in D$ such that
       \[
       x_n\neq x_m, \ \ (n\neq m), \ \ \ y<y_n\leq y', \ \ (n\in \mathbb{N}).
       \]
   \item condition $A_*(D)$ if for each $y>y'$ there is at least countably many (not finitely many) points $v_n=(x_n,y_n)\in D$ such that
       \[
         x_n\neq x_m, \ \ (n\neq m), \ \ \ y>y_n\geq y', \ \ (n\in \mathbb{N}).
       \]
\end{enumerate}
Let us denote that $y^*(D)=\sup\{y'|y' \textrm{ satisfies $A^*(D)$ or $A_*(D)$ } \}$. If there is any number $y'$ that does not satisfy either the condition $A^*(D)$ or the condition $A_*(D)$, we assume that $y^*(D)=0$.
\end{definition}
D. Bugajewski and E. Grzelaczyk have calculated the measure of noncompactness of Kuratowski $\alpha$ on the space $(\rR^2,d^*)$.
\begin{theorem}[Theorem 2., \cite{bugajewski1998measures}]
For an arbitrary bounded set $D\subset \rR^2$ with the river metric, it holds that
\[
\alpha (D)=2y^*(D) \ \textrm{ and } \ \chi(D)=y^*(D).
\]
\end{theorem}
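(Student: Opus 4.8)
The plan is to obtain both equalities by proving four inequalities: the lower bounds $\alpha(D)\ge 2y^*(D)$ and $\chi(D)\ge y^*(D)$ by a pigeonhole argument, and the matching upper bounds by exhibiting, for each $\varepsilon>0$, an explicit finite covering. The geometric fact driving everything is that two points $(x_n,y_n),(x_m,y_m)\in D$ with \emph{distinct} first coordinates satisfy $d^*=|y_n|+|y_m|+|x_n-x_m|\ge |y_n|+|y_m|$, so if both sit at height close to a level $y'>0$ they are roughly $2y'$ apart. The whole argument is the conversion of the definition of $y^*(D)$ into the following clean finiteness statement, which I would isolate as the key step: for every $t>y^*(D)$ the points of $D$ with $y\ge t$ use only finitely many distinct first coordinates, and likewise for the points with $y\le -t$. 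I would prove this by contradiction: if infinitely many points with pairwise distinct $x$-coordinates had $y\ge t$, then — since $D$ is $d^*$-bounded, and $d^*$-boundedness bounds both the $x$- and the $y$-range — their ordinates would accumulate at some level $y''\ge t$; passing to a subsequence approaching $y''$ from below or from above produces respectively the configuration witnessing $A^*(D)$ or $A_*(D)$ at $y''$, forcing $y''\le y^*(D)$ and contradicting $y''\ge t>y^*(D)$.

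For the lower bounds I would fix any $y'$ satisfying $A^*(D)$ or $A_*(D)$ with $y'>0$, and any small $\eta\in(0,y')$. By definition there are infinitely many points of $D$ with distinct first coordinates in the thin strip $(y'-\eta,y']$ (resp.\ $[y',y'+\eta)$), all with $|y_n|>y'-\eta$. If $D$ were covered by finitely many sets of diameter $\le\delta$ (resp.\ finitely many balls of radius $\le r$), pigeonhole forces one covering piece to contain two of these points, whose mutual distance exceeds $2(y'-\eta)$; hence $\delta>2(y'-\eta)$ (resp.\ $2r>2(y'-\eta)$). Letting $\eta\to0$ and taking the supremum over admissible $y'$ gives $\alpha(D)\ge 2y^*(D)$ and $\chi(D)\ge y^*(D)$.

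For the upper bounds, fix $\varepsilon>0$ and set $t=y^*(D)+\varepsilon/4$. I split $D$ into the \emph{wings} $|y|\ge t$ and the \emph{central strip} $|y|\le t$. By the key step the wings lie on finitely many vertical lines $x=x^{(i)}$, each over a bounded ordinate range; subdividing each line into finitely many short vertical pieces covers the wings by finitely many sets of diameter $\le\varepsilon$ (equivalently finitely many balls of radius $\le\varepsilon$, since on a fixed vertical line $d^*$ reduces to $|y-y'|$). The central strip has bounded $x$-range because $D$ is bounded. For $\alpha$ I would cover it by the finitely many boxes $[x_j,x_{j+1}]\times[-t,t]$ of $x$-mesh $\varepsilon/2$: any two points of such a box lie at distance at most $2t+\varepsilon/2\le 2y^*(D)+\varepsilon$. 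For $\chi$ I would use axis-centred balls $B((x_0,0),y^*(D)+\varepsilon)$, i.e.\ the diamonds $\{|y|+|x-x_0|<y^*(D)+\varepsilon\}$; even at the top level $|y|=t$ each spans an $x$-interval of length $2(y^*(D)+\varepsilon-t)=3\varepsilon/2$, so spacing the centres $x_0$ by $3\varepsilon/2$ covers the whole strip with finitely many balls of radius $y^*(D)+\varepsilon$. Letting $\varepsilon\to0$ yields $\alpha(D)\le 2y^*(D)$ and $\chi(D)\le y^*(D)$.

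The main obstacle is the key finiteness step: it is the only place where the accumulation-type definition of $y^*(D)$ is turned into usable information, and it needs both directions of accumulation to be handled (hence the joint role of $A^*$ and $A_*$) as well as the degenerate case $y^*(D)=0$, where the very same argument shows that each measure equals $0$. Everything else is routine bookkeeping: the elementary remark that $d^*$-boundedness bounds the coordinate ranges, and a harmless open/closed-ball adjustment that is comfortably absorbed into the slack $\varepsilon/4$ built into the choice of $t$.
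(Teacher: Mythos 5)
The paper itself does not prove this statement: it is imported verbatim (as Theorem~2) from the cited reference of Bugajewski and Grzelaczyk, so there is no in-paper proof to compare against, and I judge your argument on its own merits. Your architecture is the natural and correct one: the finiteness lemma (above any level $t>y^*(D)$ the set $D$ meets only finitely many vertical lines), the pigeonhole lower bounds driven by the separation $d^*\geq |y_n|+|y_m|$ for points with distinct abscissae, and explicit covers for the upper bounds (short vertical segments for the wings, boxes of small $x$-mesh for $\alpha$, axis-centred balls for $\chi$). The quantitative estimates on the upper half-plane all check out, including the $2t+\varepsilon/2$ diameter bound for the boxes and the $3\varepsilon/2$ spacing of the ball centres.

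The gap is in the lower half-plane, and it is forced on you by the fact that the definition of $y^*(D)$ as transcribed in this paper is defective. In your key step, the clause ``and likewise for the points with $y\le -t$'' does not follow from the printed definition: there the accumulation level satisfies $y''\le -t<0$, and exhibiting $y''$ as a level satisfying $A^*(D)$ or $A_*(D)$ yields only $y''\le y^*(D)$, which contradicts nothing --- the contradiction $y''\ge t>y^*(D)\ge y''$ is available only in the upper wing. This cannot be repaired under the literal definition, because the literal statement is false: for $D=\{(1/n,-1)\ |\ n\in\mathbb{N}\}$ every pair of distinct points is at distance greater than $2$, so $\alpha(D)=2$ and $\chi(D)=1$, yet the only level satisfying $A^*(D)$ or $A_*(D)$ is $y'=-1$, so the supremum in the printed definition equals $-1$. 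The intended reading (and, in substance, that of the original reference) counts a level through its absolute value, $y^*(D)=\sup\{|y'| \ : \ y' \text{ satisfies } A^*(D) \text{ or } A_*(D)\}$. Under that reading your key step becomes correct ($|y''|\ge t$ forces $|y''|\le y^*(D)$, a genuine contradiction), but then your lower bound acquires the dual gap: you explicitly restrict to admissible levels $y'>0$, so for the example above your pigeonhole argument never fires and you obtain only $\alpha(D)\ge 0$ instead of $\alpha(D)\ge 2$. Thus under either reading exactly one half of your proof silently loses the sets whose noncompactness sits below the river. The repair is easy and should be made explicit: work with $|y'|$ and $|y_n|$ throughout, treat the two half-planes symmetrically in both the finiteness lemma and the pigeonhole bound, and note in passing that the final clause of the printed definition, read literally (``if there is any number $y'$ that does not satisfy either condition\ldots''), would force $y^*(D)=0$ for every bounded $D$, so it too must be read as ``if no $y'$ satisfies either condition.''
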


In addition to these two measures of noncompactness of the set, we also consider the Istratescue measure $\beta$, which is defined by
\begin{align*}
\beta(D) & =\inf \{r>0 \ | \ \textrm{$D$ does not have an infinite $r$-separation}\}\\
& =\sup \{r>0 \ | \ \textrm{$D$ has an infinite $r$-separation}\},
\end{align*}
where for the set $D$ we say that it is an $r$-separation (in the complete metric space) if $d(x,y)>r$, for all $x,y\in D$, $x\neq y $.

Let $D\subset \rR^2$ with the river metric $d^*$ and let $y'\in \rR^2$ satisfies the condition $A^*(D)$. This means that there is a sequence $(v_n)_{n\in \mathbb{N}}=((x_n,y_n))_{n\in nN}\subset D$, such that $x_n\neq x_m$ for $n\neq m$ and $y<y_n\leq y '$ for arbitrary $y<y'$. Now we have,
\[
d^*(v_n,v_m)=|y_n|+|y_m|+|x_n-x_m|\geq \max\{|y_n|,|y_m|\}\geq \inf \{|y_n| \ | \ n\in \nN\}=r.
\]
This means that we have $r$-separation and
\[
\beta (D)=\sup \limits_{(x_n,y_n)\in D}\inf \limits_{n\in \nN}|y_n|=y^*(D)=\chi(D).
\]
Some properties of the Kuratowski and Istratescue measure on the space $\mathbb{R}^2$ with the river metric were given in \cite {article.15}. \\
The modulus of noncompact convexity in relation to the measure of noncompactness $\alpha$, on the metric space $(\rR^2,d^*)$ is defined by
\[
\triangle_{\rR^2,\alpha}(\varepsilon)=inf\{1-d^*(0,A) \ | \ A\subseteq B((0,0),1), \ A=coA=\overline{A}, \ \alpha (A)\geq \varepsilon\}.
\]
Let $A \subset \rR^2$ be an arbitrary closed, convex subset of the unit ball, for which $\alpha(A)=2y^*(A)\geq \varepsilon$ holds. Hence
\[
d^*((0,0),A)=\inf\{d^*((0,0),(x,y)) \ | \ (x,y)\in A\}=\inf\{|x|+|y| \ | \ (x,y)\in A\}.
\]
Now we have
\[
1-d^*((0,0),A)=1-\inf\{|x|+|y| \ | \ (x,y)\in A\}=\sup \{1-|x|-|y| \ | \ (x,y)\in A\}.
\]
Therefore,
\[
\triangle_{\rR^2,\alpha}(\varepsilon)=\inf \limits_{\alpha(A)\geq \varepsilon}\sup \limits_{(x,y)\in A}(1-|x|-|y|).
\]
Due to Theorem \ref{Tkonveksniskupovi} we will consider two different cases, considering the fact that the set $A$ is convex. \\

\textbf{I}: Let $A=B((x^*,y^*),r)$, where $|y^*|<r$ (if $|y^*|\geq r$ then the set $A$ is relatively compact, so $\alpha(A)=0$). Then the point $\overline{v}=(\overline{x},0)\in A$ is the closest to the point $(0,0)$,  satisfying the condition
\[
|y^*|+|x^*-\overline{x}|=r.
\]
Hence
\[
\overline{x}=\left\{\begin{array}{lcl}x^*-r+|y^*| & ; & x^*>\overline{x}\\ x^*+r-|y^*| & ; & x^*\leq \overline{x}. \end{array} \right.
\]
Without loss of generality, let the set $A$ be "to the right of $(0,0)$", that is, let $\overline{x}=x^*-r+|y^*|$. Then
\[
d^*((0,0),A)=d^*((0,0),(\overline{x},0))=\overline{x} .
\]
Therefore
\[
\triangle_{\rR^2,\alpha}(\varepsilon)=\inf \limits_{\alpha(A)\geq \varepsilon}(1-\overline{x}).
\]
Since $y^*(A)=y^*+r$, then $\alpha(A)=2(y^*+r)$, so the problem of determining the modulus of noncompact convexity is reduced to solving the problem of minimizing the function $1 -\overline{x}=1-x^*+r-|y^*|$, under the condition $2(y^*+r)\geq \varepsilon$. Since $A\subseteq B((0,0),1)$, without loss of generality we can assume that $y^*<0$. Now, the problem of minimization is defined by
\begin{align*}
1-x^*+r+y^* & \rightarrow \ inf \\
y^*+r & \geq \frac{\varepsilon}{2} \\
0\leq x^*,\, y^* & \leq 1.
\end{align*}
Obviously, the infimum is achieved for $y^*=\displaystyle \frac{\varepsilon}{2}-r$ and $x^*=1$, and we have
\[
\triangle_{\rR^2,\alpha}(\varepsilon)=\frac{\varepsilon}{2}.
\]
\textbf{II}: If $A=[a,b]\times [c,d]$ ($a,b,c,d\in \rR$, $c\leq 0$ and $d\geq 0$), then the point $(a,0)$ is the closest to the point $(0,0)$, i.e.
\[
d^*((0,0),A)=d^*((0,0),(a,0))=|a|.
\]
Now, determining the modulus of noncompact convexity is reduced to the minimization of the function $1-d^*((0,0),A)=1-|a|$, provided that $\alpha(A)\geq \varepsilon$. Since $y^*(A)=d$, we conclude that $\alpha(A)=2d\geq \varepsilon$. So, $d\geq \displaystyle \frac{\varepsilon}{2}$ holds. Therefore, $a\leq b\leq 1-\displaystyle \frac{\varepsilon}{2}$. Hence
\[
\triangle_{\rR^2,\alpha}(\varepsilon)=\inf \limits_{\alpha(A)\geq \varepsilon}(1-|a|)=1-\left(1-\frac{\varepsilon}{2}\right)=\frac{\varepsilon}{2}.
\]

Knowing the connections between modulus of the noncompactness  for different measures of noncompactness, we conclude that
\[
\triangle_{\rR^2,\chi}(\varepsilon)=\varepsilon=\triangle_{\rR^2,\beta}(\varepsilon) .
\]
According to the definition of the characteristic of nearly uniform convexity (shortly NUC)  (\cite{Ayerbe_knjiga}), we have
\[
\varepsilon_{\alpha}(\rR^2)=\sup\{\varepsilon\geq 0 \ | \ \triangle_{\rR^2,\alpha}(\varepsilon)=0\}=0 .
\]
Obviously, $\varepsilon_{\chi}(\rR^2)=\varepsilon_{\beta}(\rR^2)=0$ also holds. Therefore, $(\rR^2,d^*)$ is a NUC space. \\
At the end, we recall the definition of the uniformly convex $W$-convex metric space and we prove this property for the space $(\rR^2,d^*)$.

\begin{definition}[\cite{article.13}]
A $W$-convex metric space is said to be uniformly convex (shortly UC) if for each $\varepsilon >0$ there is a $\delta (\varepsilon)>0$ such that for $c>0$, $x,y,z\in X$ with $d(z,x)\leq c$, $d(z,y)\leq c$ and $d(x,y)\geq c\varepsilon$, we have
\[
d(z,W(x,y,\displaystyle \frac{1}{2}))\leq c(1-\delta) .
\]
\end{definition}

\begin{theorem}
The space $(\rR^2,d^*)$ is uniformly convex space.
\end{theorem}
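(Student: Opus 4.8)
The plan is to exhibit an explicit modulus; in fact I claim $\delta(\varepsilon)=\varepsilon/2$ works. Writing $m=W(x,y,\tfrac12)$ for the midpoint of the metric segment $[x,y]$ (the point with $d^*(x,m)=d^*(m,y)=\tfrac12 d^*(x,y)$, well defined since the space is an $M$-space), the whole statement reduces to the single sharp estimate
\[
d^*(z,m)\le \max\{d^*(z,x),d^*(z,y)\}-\tfrac12\,d^*(x,y)\qquad(x,y,z\in\rR^2).
\]
Granting this, the hypotheses $d^*(z,x)\le c$, $d^*(z,y)\le c$ and $d^*(x,y)\ge c\varepsilon$ give at once $d^*(z,m)\le c-\tfrac12 c\varepsilon=c\bigl(1-\tfrac{\varepsilon}{2}\bigr)$, which is precisely the defining inequality of uniform convexity, with $\delta=\varepsilon/2$ independent of $c$. (For $\varepsilon>2$ the hypotheses are vacuous by the triangle inequality $d^*(x,y)\le d^*(x,z)+d^*(z,y)\le 2c$, so nothing extra is needed there.) Note that Theorem \ref{KuglaKonveksan} already delivers the weaker bound $d^*(z,m)\le\max\{d^*(z,x),d^*(z,y)\}$; the entire content of the argument is to quantify the gap as exactly $\tfrac12 d^*(x,y)$.

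To prove the displayed estimate I would exploit the tree structure implicit in Lemma \ref{MetrickiSegment}: the $x$-axis acts as a spine and each vertical line $\{t\}\times\rR$ is a branch attached at $(t,0)$, so $(\rR^2,d^*)$ is uniquely geodesic and every geodesic triangle is a tripod. Given $x,y,z$ I would first locate their median point $p$, i.e. the common point of the three segments $[x,y]$, $[x,z]$, $[y,z]$; by Lemma \ref{MetrickiSegment} its column is the median of the columns of $x,y,z$, with the evident correction when $z$ shares a column with $x$ or $y$. For this $p$ the three distances split additively, $d^*(z,x)=d^*(z,p)+d^*(p,x)$, $d^*(z,y)=d^*(z,p)+d^*(p,y)$ and $d^*(x,y)=d^*(p,x)+d^*(p,y)$, while, $m$ and $p$ both lying on the geodesic $[x,y]$, one has $d^*(z,m)=d^*(z,p)+d^*(p,m)$ with $d^*(p,m)=\bigl|d^*(p,x)-\tfrac12 d^*(x,y)\bigr|$. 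Setting $A=d^*(z,x)$, $B=d^*(z,y)$, $C=d^*(x,y)$ and substituting $d^*(z,p)=\tfrac12(A+B-C)$ and $d^*(p,x)=A-d^*(z,p)$, a one-line computation collapses the right-hand side to $\tfrac12\bigl(A+B+|A-B|\bigr)-\tfrac{C}{2}=\max\{A,B\}-\tfrac{C}{2}$, which is exactly the claim (in fact an equality).

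The main obstacle is the one genuinely computational step, namely verifying the additive decomposition through $p$ — equivalently, that $(\rR^2,d^*)$ contains no geodesic cycles. I would settle this by the finite case analysis suggested by Lemma \ref{MetrickiSegment}: order the columns $x_0,x_1,x_2$, fix the signs of $y_0,y_1,y_2$, and in each configuration read off $p$ and check the three additive identities directly from the definition of $d^*$. The vertical sub-case $x_1=x_2$ deserves separate mention, since there $m=\bigl(x_1,\tfrac{y_1+y_2}{2}\bigr)$ and $[x,y]$ is a single vertical segment; after passing to gates onto it, the estimate reduces to the elementary one-dimensional inequality $\tfrac12|a+b|\le\max\{|a|,|b|\}-\tfrac12|a-b|$ for the relevant heights, so the same $\delta=\varepsilon/2$ survives. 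Collecting the cases proves the displayed estimate and hence the theorem.
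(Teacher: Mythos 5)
Your proposal is correct, and it takes a genuinely different route from the paper. The paper works directly with the explicit coordinate formulas for the midpoint $W\left(v_1,v_2,\frac{1}{2}\right)$ derived in Section 2: it splits into cases according to whether the midpoint lies on the river (the $x$-axis) or on one of the vertical branches, estimates $d^*\left(z,W\left(v_1,v_2,\frac{1}{2}\right)\right)$ in each case, and chooses a different $\delta$ in each case, namely $\delta=\min\left\{\frac{\varepsilon|y_1|}{d^*(v_1,v_2)},\frac{\varepsilon|y_2|}{d^*(v_1,v_2)}\right\}$ in the first case and $\delta=\varepsilon$ in the second. You instead exploit the $\rR$-tree (tripod) structure of $(\rR^2,d^*)$: the median point $p$ of $x,y,z$, the additive splitting of the three distances through $p$, and the gate identity $d^*(z,m)=d^*(z,p)+d^*(p,m)$ collapse everything to the exact formula $d^*(z,m)=\max\{d^*(z,x),d^*(z,y)\}-\frac{1}{2}d^*(x,y)$, from which the single uniform modulus $\delta(\varepsilon)=\frac{\varepsilon}{2}$ falls out with no further case distinctions. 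Your approach buys two things. First, generality and sharpness: the estimate is an equality, valid in any $\rR$-tree, and quantifies precisely the gap left open by the ball-convexity bound of Theorem \ref{KuglaKonveksan}. Second, and more importantly, uniformity: the definition of uniform convexity demands a $\delta$ depending only on $\varepsilon$, whereas the paper's first-case choice depends on $|y_1|$, $|y_2|$ and $d^*(v_1,v_2)$ and degenerates to $0$ when an endpoint lies on the river (e.g. $y_2=0$), so your configuration-independent $\delta=\frac{\varepsilon}{2}$ is not merely a stylistic improvement but closes a real weakness in the case-by-case argument. The price you pay is the verification of the tripod decomposition (equivalently, the gate property of geodesics), but as you note this is a finite case check from Lemma \ref{MetrickiSegment}, comparable in length to one of the paper's cases and done only once.
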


\begin{proof}
Let $\varepsilon>0$ and $c>0$ be arbitrary, and let $v_1=(x_1,y_1), v_2=(x_2,y_2), z=(z_x,z_y)\in \rR^2$ such that
\begin{align*}
d^*(v_1,z)& =|y_1|+|z_y|+|x_1-z_x|\leq c , \\
d^*(v_2,z)& =|y_2|+|z_y|+|x_2-z_x|\leq c \ \textrm{ i } \\
d^*(v_1,v_2)& \geq c\varepsilon .
\end{align*}
Let
\[
\delta =\min \left \{ \frac{\varepsilon |y_1|}{d*(v_1,v_2)},\frac{\varepsilon |y_2|}{d^*(v_1,v_2)}\right \} .
\]
If
\[
W\left(v_1,v_2, \frac{1}{2}\right)=\left(-\frac{1}{2}|y_1|+\frac{1}{2}|y_2|+\frac{1}{2}x_1+\frac{1}{2}x_2,0\right),
\]
and if $z_x\geq \displaystyle \frac{|y_2|-|y_1|+x_2-x_1}{2}$, then
\begin{align*}
    d^*\left(z,W\left(v_1,v_2,\frac{1}{2}\right)\right) & =  |z_y|+z_x-\frac{|y_2|+x_2-|y_1|+x_1}{2} \\
     & =  \frac{|z_y|+|y_1|+z_x-x_1}{2}+\frac{|z_y|-|y_2|+z_x-x_2}{2} \\
     & \leq   \frac{d^*(v_1,z)}{2}+\frac{d^*(v_2,z)}{2}-|y_2| \\
     & \leq  c-|y_2|=c\left (1-\frac{|y_2|}{c}\right ) \\
     & \leq  c\left (1-\frac{\varepsilon |y_2|}{d^*(v_1,v_2)} \right) \leq c(1-\delta),
\end{align*}
i.e. $(\rR^2,d^*)$ is uniformly convex. The proof is analogous when $z_x\leq \displaystyle \frac{|y_2|-|y_1|+x_2-x_1}{2}$. \\
If
\[
W\left(v_1,v_2,\frac{1}{2}\right)=\left (x_1,\frac{1}{2}y_1-\frac{1}{2}|y_2|-\frac{1}{2}|x_1-x_2|\right ) ,
\]
where $y_1>0$, then let $\delta =\varepsilon$. It holds
\begin{align*}
    d^*\left(z,W\left(v_1,v_2,\frac{1}{2}\right)\right) & = |z_y|+|\frac{1}{2}y_1-\frac{1}{2}|y_2|-\frac{1}{2}|x_1-x_2||+|z_x-x_1| \\
     & =  |z_y|+ \frac{y_1-|y_2|-|x_1-x_2|}{2}+|z_x-x_1| \\
     & =  |z_y|+y_1- \frac{1}{2}(y_1+|y_2|+|x_1-x_2|)+|z_x-x_1| \\
     & =  d^*(v_1,z)-d^*(v_1,v_2) \\
     & \leq  c-c\varepsilon =c(1-\varepsilon)=c(1-\delta) ,
\end{align*}
i.e. $(\rR^2,d^*)$ is uniformly convex. Analogously, we prove other cases, depending on the value of $W\left(v_1,v_2,\frac{1}{2}\right)$.
\end{proof}



\begin{thebibliography}{33}
\bibitem{Ayerbe_knjiga} J. M. Ayerbe Toledano, T.D. Benavides T and G.L. Acedo, {\it Measures of noncompactness in metric fixed point theory}, Springer, (1997)
\bibitem{article.1} T. Foertsch, {\it Ball versus Distance convexity of metric spaces}, -----, 2 (3) (2014), 38-45.
\bibitem{bugajewski1998measures} D. Bugajewski and E. Grzelaczyk,  {\it On the measures of noncompactness in some metric spaces}, New Zealand Journal of Mathematics, New Zealand Mathematical Society and Department of Mathematics \& Statistics, University of Auckland, 2 (27) (1998), 177 -  182.
\bibitem{article.2} M. Kell, {\it Uniformly convex metric spaces}, Article in Analysis and Geometry in Metric Spaces, (2014), DOI: 10.2478/agms-2014-0015
\bibitem{article.3} V.W. Bryant, {\it The convexity of the subset space of a metric space}, Compositio mathematica, Vol. 22, Fasc. 4, (1970), pag. 383-385.
\bibitem{article.4} T.D. Narang and S. Tejpal, {\it On Metric Spaces In Which Metric Segments Have Unique Prolongations}, Bulletin of the Institute of Mathematics Academia Sinica (New Series), Vol. 4 (2009), No. 1, pp. 1-8
\bibitem{article.5} R. Khalil, {\it Best approximation in metric spaces}, Proc. Amer. Math. Soc., 103(1988), 579-586.
\bibitem{article.6} R. Freese, G. Murphy and E. Andalfte, {\it Strict convexity in M-spaces}, J. Geom., 34(1989), 42-49.
\bibitem{article.7} W. Takahashi, {\it A convexity in metric space and nonexpansive mappings, I}, Kodai math. sem. rep., 22 (1970), 142-149.
\bibitem{article.8} K. Menger, {\it  Untersuchungen \"{u}ber allgemeine Metrik}, Math. Ann. 100 (1928), 73–163.
\bibitem{article.9} L. M. Blumenthal, {\it Distance geometry}, Oxford, (1953)
\bibitem{article.10} V. I. Istratescu, {\it Strict Convexity and Complex Strict Convexity}, Marcel Dekker,
Inc., New York and Basel, (1984)
\bibitem{article.11} N. Oki\v ci\' c and A. Reki\' c-Vukovi\' c, {\it Some metrical and topological properties on the river metric on $\mathbb{R}^2$}, Advances in Mathematics: Scientific Journal 12 (2023), no.2, 407–423.
\bibitem{article.12} I. Beg and M. Abbas, {\it  Fixed points and best approximation Menger convex metric spaces}, Archivum mathematicum (Brno) Tomus 41 (2005), 389–397.
\bibitem{article.13} T. Shimizu and W. Takahashi, {\it  Fixed points of multivalued mappings in certain convex metric spaces}, Topological Methods in Nonlinear Analysis Journal of the Juliusz Schauder Center, Volume 8, (1996), 197–203.
\bibitem{article.14} M. A. Khamsi, {\it On Nearly uniformly convex metric spaces}, Palestine Journal of Mathematics
Vol. 1 (2012), 11–16.
\bibitem{article.15} A. Rekic-Vukovic and N. Okicic, {\it On Kuratowski measure of noncompactness in $\mathbb{R}^2$ with the river metric}, Balkan Journal of Applied Mathematics and Informatics, 3(2), (2020), 59-68.
\bibitem{article.16} A. K. Gupta and S. Mukherjee, {\it Menger convex metric spaces and foxed point theorems}, arXiv: 1909.12484v1, (2019).




\end{thebibliography}
\end{document}